\documentclass[preprint,12pt]{elsarticle}

\usepackage{amsthm}
\usepackage{amsmath}
\usepackage{lineno}

\journal{Journal of \LaTeX\ Templates}

\bibliographystyle{elsarticle-num}

%%%%%%%%%%%%%%%%%%%%%%%%%%%%%%%%%%%%%%%%%%%%%%%%%%%%

\usepackage{amssymb}

\newtheorem{theorem}{Theorem}
\newtheorem{definition}[theorem]{Definition}
\newtheorem{lemma}[theorem]{Lemma}
\newtheorem{proposition}[theorem]{Proposition}
\newtheorem{claim}[theorem]{Claim}

\newcommand{\en}{\mathbb N}

%%%%%%%%%%%%%%%%%%%%%%%%%%%%%%%%%%%%%%%%%%%%%%%%%%%%

\usepackage{enumerate}
\newtheorem{corollary}[theorem]{Corollary}
\newtheorem{remark}[theorem]{Remark}
\newcommand{\diam}{\operatorname{diam}}
\newcommand{\FTP}{\mathcal{FTP}}
\newcommand{\FOTP}{\mathcal{FOTP}}
\newcommand{\dist}{\operatorname{dist}}

\def\rest{\hskip-2.5pt\restriction}

%%%%%%%%%%%%%%%%%%%%%%%%%%%%%%%%%%%%%%%%%%%%%%%%%%%%

\begin{document}

\begin{frontmatter}

\title{Haar meager sets, their hulls, and relationship to compact sets}

\author{Martin Dole\v zal\fnref{MD}}
\fntext[MD]{Supported by GA \v{C}R Grant 16-07378S and  RVO: 67985840.}
\address{Institute of Mathematics, Czech Academy of Sciences. \v Zitn\'a 25, 115 67 Praha 1, Czech Republic.}
\ead{dolezal@math.cas.cz}

\author{V\'aclav Vlas\'ak\fnref{VV}}
\fntext[VV]{Supported by the grant GA\v CR P201/15-08218S and he is a junior researcher in the University Center for Mathematical Modeling, Applied Analysis and Computational Mathematics (Math MAC).}
\address{Charles University in Prague, Faculty of Mathematics and Physics, Ke Karlovu 3, 121 16 Praha 2, Czech Republic.}
\ead{vlasakvv@gmail.com}

\begin{abstract}
Let $G$ be an abelian Polish group.
We show that there is a strongly Haar meager set in $G$ without any $F_{\sigma}$ Haar meager hull (and that this still remains true if we replace $F_{\sigma}$ by any other class of the Borel hierarchy). We also prove that there is a coanalytic naively strongly Haar meager set without any Haar meager hull.
Further, we investigate the relationship of the collection of all compact sets to the collection of all Haar meager sets in non-locally compact Polish groups.
\end{abstract}

\begin{keyword}
Polish groups, Haar null, Haar meager
\end{keyword}

\end{frontmatter}

\section{Introduction}

The notion of Haar meager sets (in abelian Polish groups) was introduced by Darji \cite{Darji} (and straightforwardly generalized to the case of arbitrary Polish groups in \cite{DRVV}).
Haar meager sets form a topological counterpart to the so called Haar null sets defined by Christensen in \cite{Christensen}.
Let us recall the definitions (and some of its variants).

\begin{definition}\label{definition HN}
	Let $G$ be a Polish group. A set $A\subseteq G$ is said to be
	\begin{itemize}
		\item[(i)] \emph{Haar null} if there are a Borel set $B\subseteq G$ such that $A\subseteq B$, and a Borel probability measure $\mu$ on $G$ such that $\mu(gBh)=0$ for every $g,h\in G$;
		\item[(ii)] \emph{generalised Haar null} if there are a universally measurable set $B\subseteq G$ such that $A\subseteq B$, and a Borel probability measure $\mu$ on $G$ such that $\mu(gBh)=0$ for every $g,h\in G$.
	\end{itemize}
\end{definition}

\begin{definition}\label{definition HM}
	Let $G$ be a Polish group. A set $A\subseteq G$ is said to be
	\begin{itemize}
		\item[(i)] \emph{Haar meager} if there are a Borel set $B\subseteq G$ such that $A\subseteq B$, a compact metric space $K$ and a continuous function $f\colon K\rightarrow G$ such that $f^{-1}(gBh)$ is meager in $K$ for every $g,h\in G$;
		\item[(ii)] \emph{strongly Haar meager} if there are a Borel set $B\subseteq G$ such that $A\subseteq B$, and a compact set $K\subseteq G$ such that $gBh\cap K$ is meager in $K$ for every $g,h\in G$;
		\item[(iii)] \emph{naively Haar meager} if there is a compact metric space $K$ and a continuous function $f\colon K\rightarrow G$ such that $f^{-1}(gAh)$ is meager in $K$ for every $g,h\in G$;
		\item[(iv)] \emph{naively strongly Haar meager} if there is a compact set $K\subseteq G$ such that $gAh\cap K$ is meager in $K$ for every $g,h\in G$.
	\end{itemize}
\end{definition}

Darji proved that in any abelian Polish group $G$, Haar meager sets form a $\sigma$-ideal contained in the $\sigma$-ideal of all meager sets, and that these two $\sigma$-ideals coincide if and only if $G$ is locally compact. These results clearly correlate to the analogous well known results concerning Haar null sets proved by Christensen in \cite{Christensen}.
Other similarities of the $\sigma$-ideals of Haar meager sets and of Haar null sets were investigated in \cite{Jablonska} and \cite{DRVV}. We should note that it is not known whether every (naively) Haar meager set is (naively) strongly Haar meager.

In this paper we study two different topics. First, we investigate whether it is possible to replace the Borel hull $B$ from (i) in Definition \ref{definition HM} by a hull from some other class of sets, e.g. by an $F_{\sigma}$ hull. Next, we investigate the relationship of the collection of all compact sets to the collection of all Haar meager sets in non-locally compact Polish groups.

Let us look at the content of this paper a little closer.
In Chapter \ref{notation} we introduce the notation and recall some facts needed later.
Chapter \ref{F_sigma} is devoted to the descriptive complexity of hulls of Haar meager sets. This chapter is very closely inspired by a paper of Elekes and Vidny{\'a}nszky \cite{EV} (both by the results and by the proof methods). Elekes and Vidny{\'a}nszky answered a question posed by Mycielski \cite[Problem (P1)]{Mycielski} by proving that in any abelian Polish group, there is a Haar null set which cannot be cover by a $G_{\delta}$ Haar null set \cite[Theorem~1.3]{EV}. They also offered a more general statement where $G_{\delta}$ may be replaced by
any other class of the Borel hierarchy \cite[Theorem~1.4]{EV}. Finally, they answered a question asked by Fremlin at his webpage by showing that there are generalised Haar null sets which are not Haar null \cite[Theorem~1.8]{EV}. Comparing the Definitions \ref{definition HN} and \ref{definition HM}, it is reasonable to believe that analogous results could be proved also for Haar meager sets where `Haar null' corresponds to `Haar meager' and the multiplicative class $G_{\delta}$ corresponds to the additive class $F_{\sigma}$. The purpose of Chapter \ref{F_sigma} is showing that this is indeed possible by making only subtle changes in the proofs from \cite{EV}. The main result of Chapter \ref{F_sigma} is Theorem \ref{MainResult} whose special cases are Theorems \ref{MR1} and \ref{MR2}. Note that instead of the family of all universally measurable sets, the topological counterpart to the generalised Haar null sets should consider the family of all sets $B\subseteq G$ such that for every compact metric space $K$ and every continuous function $f\colon K\rightarrow G$, the preimage $f^{-1}(B)\subseteq K$ has the Baire property. However, we do not need to formulate this definition (and its strong variant) since Theorem \ref{MR2} uses a coanalytic naively strongly Haar meager set which is a stronger notion.

The results from Chapter~\ref{compacts} are inspired by the question posed in~\cite[Question~6]{DRVV} which asks whether compact subsets of a non-locally compact Polish group are Haar meager. We provide a sufficient condition for $F_{\sigma}$ subsets of a Polish group to be strongly Haar meager. Then we show if $G$ is either the symmetric group $S_{\infty}$ or any non-locally compact Polish group with a translation invariant metric then all compact subsets of $G$ satisfy this sufficient condition, and thus they are strongly Haar meager. This improves the result by Jab{\l}o{\'n}ska who proved that every compact subset of a non-locally compact abelian Polish group is Haar meager \cite{Jablonska}.

\section{Notation}\label{notation}
For a Polish group $G$ let $\mathcal K(G)$ denote the family of all nonempty compact subsets of $G$ with the Vietoris topology.
If $Z$ is a Polish space then $\mathcal C(Z,G)$ denotes the family of all continuous functions from $Z$ to $G$ with the topology of uniform convergence.

For every $1\le\xi<\omega_1$, let $\Sigma^0_{\xi}$ be the $\xi$th additive Borel class and $\Pi^0_{\xi}$ be the $\xi$th multiplicative Borel class. Let $\Delta^1_1$ denote the class of all Borel sets, $\Sigma_1^1$ the class of all analytic sets and $\Pi_1^1$ the class of all coanalytic sets. Whenever $\Gamma$ is one of these classes then we define its dual class $\check\Gamma$ by $\check\Gamma=\{\sim A\colon A\in\Gamma\}$.

Recall that if $\Gamma$ is a class of sets in Polish spaces and $X$ is a Polish space then a set $U\subseteq 2^{\omega}\times X$ is called \emph{universal for} $\Gamma(X)$ if $U\in\Gamma(2^{\omega}\times X)$ and $\{U_y\colon y\in2^{\omega}\}=\Gamma(X)$.
By \cite[Theorems 22.3 and 26.1]{Kechris}, if $\Gamma$ is one of the classes $\Sigma^0_{\xi}$, $\Pi^0_{\xi}$, $\Sigma^1_1$ or $\Pi^1_1$, and if $X$ is a Polish space then there exists a universal set for $\Gamma(X)$.

Recall that if $X,Y$ are Polish spaces and $\Phi\colon X\rightarrow 2^{2^Y}$ is a function (whose values are collections of subsets of $Y$) then $\Phi$ is said to be \emph{Borel on Borel} if for every Polish space $Z$ and every Borel set $A\subseteq Z\times X\times Y$, the set $\{(z,x)\in Z\times X\colon A_{z,x}\in\Phi(x)\}$ is Borel.

Whenever $s\in\omega^{<\omega}$ is a finite sequence of elements of $\omega$, we write $|s|$ for the length of $s$. 

\section{The descriptive complexity of hulls of Haar meager sets}\label{F_sigma}
First of all, we prove the following simple characterization of Haar meager sets which states that the witnessing compact $K$ from (i) in Definition \ref{definition HM} can be always chosen to be the Cantor set, denoted by $C$.

\begin{proposition}
Let $G$ be a Polish group and let $A$ be a subset of $G$. Then the following assertions are equivalent:
	\begin{itemize}
		\item[(a)] $A$ is Haar meager;
		\item[(b)] there are a Borel set $B\subseteq G$ such that $A\subseteq B$, and $f\in\mathcal C(C,G)$ such that $f^{-1}(gBh)$ is meager in $C$ for every $g,h\in G$.
	\end{itemize}
\end{proposition}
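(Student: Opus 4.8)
The plan is to prove the nontrivial direction (a) $\Rightarrow$ (b), since (b) $\Rightarrow$ (a) is immediate (the Cantor set $C$ is a compact metric space, so (b) is literally a special case of clause (i) in Definition \ref{definition HM}). So suppose $A$ is Haar meager, witnessed by a Borel set $B\supseteq A$, a compact metric space $K$, and a continuous $f\colon K\to G$ with $f^{-1}(gBh)$ meager in $K$ for all $g,h\in G$. The goal is to replace the arbitrary witnessing space $K$ by the Cantor set $C$, keeping the \emph{same} Borel hull $B$.

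The key idea is to use a continuous surjection $\varphi\colon C\to K$, which exists because every nonempty compact metric space is a continuous image of the Cantor set. Then I would set $f'=f\circ\varphi\in\mathcal C(C,G)$ and check that $f'$ is the desired witness, i.e.\ that $(f')^{-1}(gBh)=\varphi^{-1}\bigl(f^{-1}(gBh)\bigr)$ is meager in $C$ for every $g,h\in G$. The main point to verify is therefore the following transfer principle: if $M\subseteq K$ is meager in $K$ and $\varphi\colon C\to K$ is a continuous surjection, then $\varphi^{-1}(M)$ is meager in $C$. Granting this, applying it to each meager set $M=f^{-1}(gBh)$ finishes the proof, and $B$ never changes, which is exactly what clause (b) demands.

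The real content — and the step I expect to be the main obstacle — is this meagerness-transfer lemma, because preimages under continuous maps do \emph{not} preserve meagerness in general (a continuous surjection can collapse a comeager set onto a point). To get it for a surjection from the Cantor set, I would exploit that $C$ is compact and zero-dimensional and argue on the level of nowhere dense sets. It suffices to show that if $F\subseteq K$ is nowhere dense (we may assume closed, enlarging $M$ to a countable union of closed nowhere dense sets), then $\varphi^{-1}(F)$ is nowhere dense in $C$. Since $\varphi^{-1}(F)$ is closed, I only need that it has empty interior; equivalently, that every nonempty (basic clopen) subset $U$ of $C$ meets $\varphi^{-1}(K\setminus F)=\varphi^{-1}(\mathrm{int}_K(K\setminus F)\cup\partial\cdots)$. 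The clean way is to use that $\varphi$ is a \emph{closed} map (continuous from compact to Hausdorff) and surjective: for a basic clopen $U\subseteq C$, the image $\varphi(U)$ is a nonempty closed set, but one needs an \emph{open} piece to avoid the nowhere-dense $F$. I would instead observe that $\varphi$ is in fact surjective and that $K\setminus F$ is dense open in $K$; then $\varphi^{-1}(K\setminus F)$ is open and dense in $C$, since its closure is a closed set whose image under the closed surjection $\varphi$ contains the dense set $K\setminus F$, hence equals $K$, forcing the closure to be all of $C$. Taking complements shows $\varphi^{-1}(F)$ is closed with empty interior, i.e.\ nowhere dense, and the countable union giving $\varphi^{-1}(M)$ is then meager, completing the lemma and with it the proposition.
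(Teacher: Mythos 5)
Your overall strategy is exactly the paper's: compose the witnessing map $f\colon K\to G$ with a continuous surjection $\varphi\colon C\to K$, keep the same Borel hull $B$, and reduce everything to a meagerness-transfer lemma for preimages under $\varphi$. You also correctly flag that lemma as the crux. But your proof of it contains a genuine error, and in fact the lemma as you state it --- for an \emph{arbitrary} continuous surjection $\varphi\colon C\to K$ --- is false. The invalid step is the inference ``$\varphi(\overline{W})$ is closed and contains the dense set $K\setminus F$, hence equals $K$, forcing $\overline{W}=C$.'' Since $\varphi$ is not injective, a proper closed subset of $C$ can perfectly well map onto all of $K$, so $\varphi(\overline{W})=K$ gives no information about $\overline{W}$. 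Concretely: write $C=C_1\cup C_2$ with $C_1,C_2$ disjoint nonempty clopen sets (each homeomorphic to $C$), let $\varphi$ restricted to $C_1$ be any continuous surjection onto $K$, and let $\varphi$ collapse $C_2$ to a single point $p\in K$. This is a continuous surjection from (a homeomorphic copy of) $C$ onto $K$; if $K$ is perfect, $F=\{p\}$ is nowhere dense in $K$, yet $\varphi^{-1}(F)\supseteq C_2$ contains a nonempty clopen set, so it is not even meager. In this example $\overline{W}=\overline{\varphi^{-1}(K\setminus F)}\subseteq C_1\neq C$ although $\varphi(\overline{W})=K$, which is precisely where your argument breaks.

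What your density argument actually needs is that $\varphi$ be \emph{irreducible}: no proper closed subset of $C$ maps onto $K$. Under that extra hypothesis your reasoning is correct ($\varphi(\overline{W})=K$ together with irreducibility immediately gives $\overline{W}=C$), but irreducibility is an additional property that must be arranged, not a consequence of surjectivity. This is exactly the point the paper handles differently: it does not take an arbitrary surjection, but invokes \cite[Lemma 2.10]{Darji}, which supplies a continuous surjection $\varphi\colon C\to K$ chosen so that preimages of meager sets are meager. So the gap is not in the architecture of your proof but in the claim that the choice of $\varphi$ is irrelevant; to repair it you must either cite such a lemma or explicitly construct (or extract, by a minimal-closed-set argument) an irreducible surjection and then run your argument on it.
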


\begin{proof}
The implication (b)$\Rightarrow$(a) is trivial. So suppose that $A$ is Haar meager. Let $B\subseteq G$ be a Borel set such that $A\subseteq B$, let $K$ be a compact metric space, and let $f\colon K\rightarrow G$ be a continuous function such that $f^{-1}(gBh)$ is meager in $K$ for every $g,h\in G$.
It is well known that every nonempty compact metric space is a continuous image of the Cantor set $C$, so there is a continuous surjection $\phi\colon C\rightarrow K$. By \cite[Lemma 2.10]{Darji}, the surjection $\phi$ can be chosen in such a way that the preimages of meager sets (in $K$) are also meager (in $C$). Then the composition $\phi\circ f\colon C\rightarrow G$ also witnesses that $A$ is Haar meager.
\end{proof}

Note that in the further text, we use the Cantor set also in other contexts (for example in Lemma~\ref{coanalytic} where we work with a subset of $2^{\omega}\times 2^{\omega}\times G$). To avoid any misunderstandings, we use the symbol $C$ only when we consider the Cantor set as a (possible) witnessing compact for some set being Haar meager while we use the designation $2^{\omega}$ in all other contexts.

The following two lemmata will be used in the proof of Proposition~\ref{phi}. Lemma~\ref{BorelOnBorel} is a modification of the theorem by Montgomery and Novikov (see \cite[Theorem~16.1]{Kechris}) and is needed for the case $\Gamma=\Delta^1_1$ and $\Lambda=\Sigma^0_{\xi}$ (for some $1\le\xi<\omega_1$) of Proposition~\ref{phi}. Lemma~\ref{coanalytic} is a modification of the theorem by Novikov (see \cite[Theorem 29.22]{Kechris}) and is needed for the complementary case $\Gamma=\Pi^1_1$ and $\Lambda=\Delta^1_1$. These two lemmata form the only new ingredients to the proof methods from \cite{EV}. Indeed, Proposition \ref{phi} is a straightforward analogy of \cite[Theorem~3.1]{EV}, Proposition \ref{PosunutiKompaktu} is an easy consequence of \cite[Proposition~3.5]{EV} and Theorem \ref{MainResult} just puts all the ingredients together (in the same way as \cite[Theorem~4.1]{EV}).

\begin{lemma}\label{BorelOnBorel}
	Let $G$ be a Polish group and let $Z$ be a Polish space.
	Then for every nonempty open set $V\subseteq C$ and for every Borel set $A\subseteq Z\times\mathcal C(C,G)\times G$, the set
	$\{(z,f)\in Z\times\mathcal C(C,G)\colon f^{-1}(A_{z,f})\cap V\textnormal{ is meager in } V\}$ is also Borel.

	In particular, the mapping $f\in\mathcal C(C,G)\mapsto\{B\subseteq G\colon f^{-1}(B)\textnormal{ is meager in }C\}\in2^{2^G}$ is Borel on Borel.
\end{lemma}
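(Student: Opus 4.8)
The plan is to reduce the statement to the classical Montgomery--Novikov theorem on the Borelness of the category quantifier (\cite[Theorem~16.1]{Kechris}), the only twist being that the function $f$ plays a double role: it both selects the section $A_{z,f}$ and is the map used to pull that section back to $C$. The first and main step is to absorb the pullback into the set itself by means of the evaluation map. Let $\mathrm{ev}\colon\mathcal C(C,G)\times C\to G$, $\mathrm{ev}(f,t)=f(t)$, which is continuous because $C$ is compact and $\mathcal C(C,G)$ carries the topology of uniform convergence. Then
$$e\colon Z\times\mathcal C(C,G)\times C\to Z\times\mathcal C(C,G)\times G,\qquad e(z,f,t)=(z,f,f(t)),$$
is continuous, so $A':=e^{-1}(A)$ is Borel. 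I would then record the section identity $A'_{z,f}=\{t\in C\colon f(t)\in A_{z,f}\}=f^{-1}(A_{z,f})$, which converts the diagonal pullback into an honest Borel section over the parameter space $X:=Z\times\mathcal C(C,G)$.

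With this in hand, the set in question is exactly $\{(z,f)\in X\colon A'_{z,f}\cap V\textnormal{ is meager in }V\}$. To deal with the localization to $V$, I would pass to the open (hence Polish) subspace $V\subseteq C$ and consider the Borel set $A'':=A'\cap(X\times V)\subseteq X\times V$. Since $V$ is open, a subset of $V$ is meager in $V$ precisely when it is meager in $C$ (a nowhere dense subset of the open set $V$ is nowhere dense in $C$, because $\partial V$ has empty interior), so ``$A''_x$ meager in $V$'' is the condition we want. Applying Montgomery--Novikov to $A''\subseteq X\times V$ yields that $\{x\in X\colon A''_x\textnormal{ is meager in }V\}$ is Borel, and with $x=(z,f)$ and $A''_{(z,f)}=f^{-1}(A_{z,f})\cap V$ this is precisely the asserted set.

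Finally, the ``in particular'' clause is the instance $V=C$. Unravelling the definition of \emph{Borel on Borel}, one must show that for every Polish space $Z$ and every Borel $A\subseteq Z\times\mathcal C(C,G)\times G$ the set $\{(z,f)\colon A_{z,f}\in\Phi(f)\}$ is Borel, where $\Phi(f)=\{B\subseteq G\colon f^{-1}(B)\textnormal{ is meager in }C\}$. But $A_{z,f}\in\Phi(f)$ says exactly that $f^{-1}(A_{z,f})$ is meager in $C$, so this is the first assertion applied with $V=C$.

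I expect the main obstacle to be the first reduction, namely recognizing that the pullback by the parameter-function $f$ can be folded into the set through the jointly continuous evaluation map, thereby turning a non-standard ``diagonal'' operation into a plain Borel section computation to which the Montgomery--Novikov machinery applies verbatim. The remaining points (Borelness of $A'$, the reduction to the subspace $V$, and the equivalence of relative and ambient meagerness for open $V$) are routine.
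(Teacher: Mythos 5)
Your proof is correct, but it takes a genuinely different route from the paper's. You fold the diagonal pullback into the set via the jointly continuous evaluation map $(f,t)\mapsto f(t)$ (continuity holds because $C$ is compact and $\mathcal C(C,G)$ carries the topology of uniform convergence), so that the sections of the Borel set $A'=e^{-1}(A)$ over the parameter space $Z\times\mathcal C(C,G)$ are exactly $f^{-1}(A_{z,f})$, and then you invoke the Montgomery--Novikov category-quantifier theorem as a black box, applied over the Polish space $V$. The paper does not perform this reduction; instead it re-runs the Montgomery--Novikov induction directly in the diagonal setting: it defines the class $\mathcal A$ of Borel sets $A$ for which the conclusion holds for every nonempty open $V$, checks that $\mathcal A$ contains open rectangles $A_Z\times A_{\mathcal C(C,G)}\times A_G$ (for these, $f^{-1}(A_G)$ is open, so meagerness in $V$ amounts to disjointness from $V$, a closed condition on $f$), and verifies closure under countable unions and under complements, the latter via a countable basis of $C$ and \cite[Proposition 8.27]{Kechris}. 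Your reduction is shorter and modular --- the lemma becomes a literal corollary of the classical theorem, with no new induction --- whereas the paper's argument is self-contained and makes explicit which features of the setup drive the induction. Two small remarks: your intermediate claim that a subset of the open set $V$ is meager in $V$ iff it is meager in $C$ is true but not needed, since applying Montgomery--Novikov with $V$ itself as the ambient Polish space already yields meagerness relative to $V$; and your appeal to \cite[Theorem 16.1]{Kechris} is legitimate in the generality you need, because in a product of two second countable spaces the $\sigma$-algebra generated by rectangles with Borel and open sides is the full Borel $\sigma$-algebra of the product, so Borel subsets of $(Z\times\mathcal C(C,G))\times V$ fall under the theorem's hypothesis.
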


\begin{proof}
	Let $\mathcal A$ be the collection of all Borel sets $A\subseteq Z\times\mathcal C(C,G)\times G$ for which the set $\{(z,f)\in Z\times\mathcal C(C,G)\colon f^{-1}(A_{z,f})\cap V\textnormal{ is meager in } V\}$ is Borel for every nonempty open set $V\subseteq C$. We need to show that $\mathcal A$ contains all Borel sets.
	
	First of all, let $A=A_Z\times A_{\mathcal C(C,G)}\times A_G$ where the sets $A_Z\subseteq Z$, $A_{\mathcal C(C,G)}\subseteq\mathcal C(C,G)$ and $A_G\subseteq G$ are open. We want to verify that $A\in\mathcal A$. So let us fix a nonempty open set $V\subseteq C$.
	For every $f\in\mathcal C(C,G)$, the preimage $f^{-1}(A_G)$ is an open subset of $C$. Therefore for every $f\in\mathcal C(C,G)$, the set $f^{-1}(A_G)\cap V$ is meager in $V$ if and only if $f^{-1}(A_G)\cap V=\emptyset$. So we have
	\begin{align*}
		&\{(z,f)\in Z\times\mathcal C(C,G)\colon f^{-1}(A_{z,f})\cap V\textnormal{ is meager in }V\}\\
		=&\left(\left(Z\times\mathcal C(C,G)\right)\setminus\left(A_Z\times A_{\mathcal C(C,G)}\right)\right)\cup\{(z,f)\in A_Z\times A_{\mathcal C(C,G)}\colon f^{-1}(A_G)\cap V=\emptyset\}\;.
	\end{align*}
	To see that this set is Borel, it is clearly enough to verify that the set $\{f\in\mathcal C(C,G)\colon f^{-1}(A_G)\cap V=\emptyset\}$ is Borel, and this is easy (in fact, this set is even closed).

	Next, we show that $\mathcal A$ is closed under countable unions and under taking complements which will finish the proof.
	
	Let $A^n$, $n\in\omega$, be sets from $\mathcal A$ and let $V\subseteq C$ be a nonempty open set. Then we have
	\begin{align*}
		&\left\{(z,f)\in Z\times\mathcal C(C,G)\colon f^{-1}\left(\left(\bigcup\limits_{n\in\omega}A^n\right)_{z,f}\right)\cap V\textnormal{ is meager in }V\right\}\\
		=&\left\{(z,f)\in Z\times\mathcal C(C,G)\colon\bigcup\limits_{n\in\omega} f^{-1}\left(A^n_{z,f}\right)\cap V\textnormal{ is meager in }V\right\}\\
		=&\bigcap\limits_{n\in\omega}\left\{(z,f)\in Z\times\mathcal C(C,G)\colon f^{-1}\left(A^n_{z,f}\right)\cap V\textnormal{ is meager in }V\right\}\;.
	\end{align*}
	This set is the countable intersection of Borel sets, and so it is Borel.
	
	Finally, suppose that $A\in\mathcal A$. Let $V\subseteq C$ be a nonempty open set and let $\mathcal B$ be a countable open basis of $C$. Recall that for every Borel set $M\subseteq C$, the set $M\cap V$ is comeager in $V$ if and only if $M\cap B$ is non-meager in $B$ whenever $B\in\mathcal B$ is such that $\emptyset\neq B\subseteq V$ (see e.g. \cite[Proposition 8.27]{Kechris}). Therefore we have
	\begin{align*}
		&\left\{(z,f)\in Z\times\mathcal C(C,G)\colon f^{-1}\left((\sim A)_{z,f}\right)\cap V\textnormal{ is meager in }V\right\}\\
		=&\left\{(z,f)\in Z\times\mathcal C(C,G)\colon f^{-1}\left(A_{z,f}\right)\cap V\textnormal{ is comeager in }V\right\}\\
		=&\bigcap\limits_{\stackrel{B\in\mathcal B}{\emptyset\neq B\subseteq V}}\left\{(z,f)\in Z\times\mathcal C(C,G)\colon f^{-1}\left(A_{z,f}\right)\cap B\textnormal{ is non-meager in }B\right\}\\
		=&\bigcap\limits_{\stackrel{B\in\mathcal B}{\emptyset\neq B\subseteq V}}\sim\left\{(z,f)\in Z\times\mathcal C(C,G)\colon f^{-1}\left(A_{z,f}\right)\cap B\textnormal{ is meager in }B\right\}\;.
	\end{align*}
	This set is the countable intersection of complements of Borel sets, and so it is Borel.
\end{proof}

In the proof of Lemma \ref{coanalytic} we will use the unfolded **-game from \cite[21.C]{Kechris} which we recall here.
Let $Y$ be a Polish space with a fixed compatible complete metric, and let $\mathcal V$ be a countable basis of nonempty open sets in $Y$. Then for every $E\subseteq Y\times\omega^{\omega}$, let $G^{**}_u(E)$ be the unfolded **-game defined as follows:

\vspace{15pt}
\noindent
\begin{tabular}{lccccc}
	\textrm{I \hskip 40 pt}  &$U_0$&&$U_1$&&\\
	&&&&&$\cdots$\\
	\textrm{II}  &&$(z(0),V_0)$&&$(z(1),V_1)$&\\
\end{tabular}
\vspace{15pt}

\noindent Players I and II play $U_i,V_i\in\mathcal V$ such that $U_0\supseteq V_0\supseteq U_1\supseteq V_1\supseteq\cdots$ and $\textnormal{diam}(U_i),\textnormal{diam}(V_i)<2^{-i}$. Player II also plays $z(i)\in\omega$. Let $y\in Y$ be the unique point such that $\{y\}=\bigcap\limits_{i\in\omega}\overline{U_i}=\bigcap\limits_{i\in\omega}\overline{V_i}$, and let $z=(z(i))_{i\in\omega}\in\omega^{\omega}$. Player II wins the game if and only if $(y,z)\in E$ (the pair $(y,z)$ is called the outcome of the run). Recall that if $E$ is closed then it holds by \cite[Theorem 21.5 and the following comments]{Kechris} that Player II has a winning strategy in the game $G_u^{**}(E)$ if and only if $\textnormal{proj}_YE$ is comeager in $Y$.

\begin{lemma}\label{coanalytic}
Let $G$ be a Polish group, let $Z$ be a Polish space, and let $V\subseteq 2^{\omega}\times 2^{\omega}\times G$ be in $\Sigma^1_1$. Then the sets
	$$\tilde V:=\{(f,x)\in \mathcal C(Z,G)\times 2^{\omega}\colon f^{-1}(V_{x,x})\textnormal{ is non-meager in }Z\}$$
	and
	$$\tilde{\tilde V}:=\{(f,x)\in \mathcal C(Z,G)\times 2^{\omega}\colon f^{-1}(V_{x,x})\textnormal{ is comeager in }Z\}$$
	are also in $\Sigma^1_1$.
\end{lemma}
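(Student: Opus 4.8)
The plan is to realize the analytic set $V$ as a projection of a closed set and then to recognise each relevant section $f^{-1}(V_{x,x})$ as a projection onto $Z$ of a closed set, so that the unfolded \textbf{}**-game becomes available. Concretely, since $V\in\Sigma^1_1$, I would fix a closed set $F\subseteq 2^\omega\times2^\omega\times G\times\omega^\omega$ with $(a,b,g)\in V\iff\exists w\,(a,b,g,w)\in F$, and for $(f,x)\in\mathcal C(Z,G)\times2^\omega$ set
\[
E_{f,x}:=\{(y,w)\in Z\times\omega^\omega\colon(x,x,f(y),w)\in F\}.
\]
Because the evaluation map $(f,y)\mapsto f(y)$ is jointly continuous for the topology of uniform convergence, $E_{f,x}$ is closed in $Z\times\omega^\omega$ and $\operatorname{proj}_Z E_{f,x}=f^{-1}(V_{x,x})$. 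By the quoted property of $G^{**}_u$, $f^{-1}(V_{x,x})$ is comeager in $Z$ if and only if Player II has a winning strategy in $G^{**}_u(E_{f,x})$, which reduces $\tilde{\tilde V}$ to showing that the set of $(f,x)$ for which II has a winning strategy is $\Sigma^1_1$. For $\tilde V$ I would first note that $f^{-1}(V_{x,x})$, being the continuous preimage of an analytic section, is itself analytic and hence has the Baire property, so it is non-meager exactly when it is comeager in some nonempty basic open set $O$ from a fixed countable basis of $Z$; each such condition is handled by the same game played inside $O$ (with $E_{f,x}\cap(O\times\omega^\omega)$), and $\tilde V$ is the countable union over $O$ of these sets.

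The heart of the argument is to show that ``$\tau$ is a winning strategy for II in $G^{**}_u(E_{f,x})$'' is $\Sigma^1_1$ as a condition on $(f,x,\tau)$, where $\tau$ ranges over the Polish space of legal strategies for II --- a countable product of countable discrete spaces, since all moves come from $\omega$ and from the countable basis $\mathcal V$. The naive reading of ``winning'' is a universal quantifier over all plays of Player I, which would only give $\Pi^1_1$; the point is that closedness of $E_{f,x}$ collapses this. Describing a position after $n$ rounds by the last chosen basic set $V_n$ (with $\diam V_n<2^{-n}$) and the sequence $w\rest(n+1)$ already played by II, call the position \emph{live} if $\overline{V_n}\times[w\rest(n+1)]$ meets $E_{f,x}$. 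Since the metric on $Z$ is complete and the diameters shrink to $0$, the nested closed boxes $\overline{V_n}\times[w\rest(n+1)]$ intersect in the singleton $\{(y,w)\}$, and a routine convergence argument shows that the outcome $(y,w)$ of a run lies in the closed set $E_{f,x}$ if and only if every position along that run is live. Consequently $\tau$ is winning if and only if every finite run consistent with $\tau$ is live.

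It then remains to track complexity. Liveness of a fixed finite position is, after unfolding the definition of $E_{f,x}$, the assertion $\exists y\in\overline{V_n}\,\exists w'\in[w\rest(n+1)]\,(x,x,f(y),w')\in F$, which is the projection of a closed set (again using joint continuity of evaluation) and hence $\Sigma^1_1$ in $(f,x)$, uniformly in the position. For a fixed finite run $q$ of Player I, ``$q$ is consistent with $\tau$'' depends on only finitely many values of $\tau$ and so is clopen in $\tau$, while ``the position of $q$ is live'' is $\Sigma^1_1$ in $(f,x)$ and independent of $\tau$. Hence
\[
\{(f,x,\tau)\colon\tau\text{ is winning}\}=\bigcap_{q}\bigl(\{q\text{ not consistent with }\tau\}\cup\{q\text{ live}\}\bigr)
\]
is a countable intersection of $\Sigma^1_1$ sets, hence $\Sigma^1_1$. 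Projecting out $\tau$ over its Polish strategy space shows $\tilde{\tilde V}\in\Sigma^1_1$, and taking in addition the countable union over the basic open sets $O$ gives $\tilde V\in\Sigma^1_1$. The only genuinely delicate step is the liveness reduction above: it is precisely here that closedness of $E_{f,x}$ --- that is, the analyticity of $V$ presented through the unfolded game --- keeps the universal quantifier over I's plays from raising the complexity, and this is the exact role of the game $G^{**}_u$.
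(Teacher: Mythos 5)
Your proposal is correct and follows essentially the same route as the paper's proof: fix a closed set $F$ projecting to $V$, form the closed sections $E_{f,x}$ (the paper's $F(f,x)$), invoke the unfolded $**$-game characterization of comeagerness, and collapse the a priori $\Pi^1_1$ quantifier over runs by observing that, for a closed payoff set, a strategy for Player II is winning if and only if every position consistent with it is \emph{live} --- which is exactly the paper's Claim that $Q(p)\cap F(f,x)\neq\emptyset$ for all $p\in\sigma$ --- and then project out the strategy. The only differences are cosmetic (you code strategies in a product of discrete spaces rather than in the space of trees, and you reduce non-meagerness to comeagerness on a basic open set via the Baire property of analytic sets, where the paper cites the same fact through \cite[Proposition 8.27]{Kechris} and runs the game inside $B$ from the start).
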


\begin{proof}
	Let $\mathcal B$ be a countable open basis of $Z$. Whenever $f\in\mathcal C(Z,G)$ and $x\in 2^{\omega}$ then by \cite[Proposition 8.27]{Kechris}, the set $f^{-1}(V_{x,x})$ is non-meager in $Z$ if and only if there is a nonempty set $B\in\mathcal B$ such that $f^{-1}(V_{x,x})\cap B$ is comeager in $B$. So it suffices to show that for every nonempty $B\in\mathcal B$, the set
	$$\tilde V_0:=\{(f,x)\in \mathcal C(Z,G)\times 2^{\omega}\colon f^{-1}(V_{x,x})\cap B\textnormal{ is comeager in }B\}$$
	is in $\Sigma^1_1$ (as we may suppose that $Z\in\mathcal B$). So let us fix a nonempty $B\in\mathcal B$ together with a compatible complete metric on $B$. By \cite[Exercise 14.3]{Kechris} there is a closed set $F\subseteq 2^{\omega}\times 2^{\omega}\times G\times\omega^{\omega}$ with $V=\textnormal{proj}_{2^{\omega}\times 2^{\omega}\times G}(F)$.
	For every $f\in\mathcal C(Z,G)$ and every $x\in 2^{\omega}$, let
	$$F(f,x):=\{(y,z)\in B\times\omega^{\omega}\colon (x,x,f(y),z)\in F\}\;,$$
which is clearly a closed subset of $B\times\omega^{\omega}$. 
	Then for every $f\in\mathcal C(Z,G)$ and every $x\in 2^{\omega}$, Player II has a winning strategy in the game $G_u^{**}(F(f,x))$ if and only if $\textnormal{proj}_BF(f,x)$ is comeager in $B$. But $\textnormal{proj}_BF(f,x)=f^{-1}(V_{x,x})\cap B$, and so we have
	$$\tilde V_0=\{(f,x)\in\mathcal C(Z,G)\times 2^{\omega}\colon\textnormal{Player II has a winning strategy in }G_u^{**}(F(f,x))\}\;.$$
	Let $T$ be the tree of all legal positions in the games $G^{**}_u$ (clearly, the tree $T$ does not depend on the choice of the subset of $B\times\omega^{\omega}$ the game is played with). We can view the tree $T$, as well as every strategy $\sigma\subseteq T$ for Player II, as an element of the space $\textnormal{Tr}$ of all trees on $\en$ (which is a Polish space by \cite[Exercise 4.32]{Kechris}). Then the set $\tilde V_0$ is the projection of the set
	\begin{eqnarray*}
	\tilde V_1&:=&\{(f,x,\sigma)\in\mathcal C(Z,G)\times 2^{\omega}\times\textnormal{Tr}\colon\\
	&&\;\;\sigma\subseteq T\textnormal{ is a winning strategy for Player II in the game }G_u^{**}(F(f,x))\}
	\end{eqnarray*}
to the first two coordinates, and so it is enough to verify that $\tilde V_1$ is analytic.

For every legal position $p\in T$ in the games $G^{**}_u$ we define a (closed) subset $Q(p)$ of $B\times\omega^{\omega}$ as the set of all possible outcomes of those runs of the games $G^{**}_u$ which start according to the position $p$. This means that if $p=\left(U_0,(z(0),V_0),U_1,\ldots,(z(n),V_n)\right)$ is of even length then we put $$Q(p)=\overline{V_n}\times\{z\in\omega^{\omega}\colon (z_0,\ldots,z_n)\textnormal{ is an initial segment of }z\}\;.$$
And if $p=\left(U_0,(z(0),V_0),U_1,\ldots,(z(n),V_n),U_{n+1}\right)$ is of odd length then we put $$Q(p)=\overline{U_{n+1}}\times\{z\in\omega^{\omega}\colon (z_0,\ldots,z_n)\textnormal{ is an initial segment of }z\}\;.$$
The proof of the following claim is already included in the proof of \cite[Theorem 29.22]{Kechris} but for completeness sake we briefly repeat it here.
	
	\begin{claim}
		Let $\sigma\subseteq T$ be a strategy for Player II in the games $G^{**}_u$. Let $f\in\mathcal C(Z,G)$ and $x\in2^{\omega}$. Then $\sigma$ is a winning strategy for Player II in the game $G_u^{**}(F(f,x))$ if and only if for every $p\in\sigma$ we have $Q(p)\cap F(f,x)\neq\emptyset$.	
	\end{claim}
	
	\begin{proof}
Let us fix a compatible complete metric on $\omega^{\omega}$, and consider the product metric on $B\times\omega^{\omega}$ (recall that we have fixed a compatible complete metric on $B$, so the product metric is also complete).		
For every infinite branch $b\in[T]$, the sequence $\left(Q(b|n)\right)_{n\in\omega}$ is a decreasing (with respect to inclusion) sequence of closed subsets of $B\times\omega^{\omega}$ whose diameters tend to 0. So for every $b\in[T]$, the intersection $\bigcap_{n\in\omega}Q(b|n)$ is a singleton whose only element $q(b)$ is the outcome of the run in the game $G_u^{**}(F(f,x))$ corresponding to $b$. Now the strategy $\sigma$ is winning for Player II in the game $G_u^{**}(F(f,x))$ if and only if $q(b)\in F(f,x)$ for every $b\in[\sigma]$, and this clearly holds if and only if $Q(b|n)\cap F(f,x)\neq\emptyset$ for every $b\in[\sigma]$ and $n\in\omega$. This finishes the proof of the claim.		
	\end{proof}
	
	By the previous claim we have
	\begin{eqnarray*}
	\tilde V_1&=&\{(f,x,\sigma)\in\mathcal C(Z,G)\times 2^{\omega}\times\textnormal{Tr}\colon\\
	&&\;\;\sigma\subseteq T\textnormal{ is a strategy for Player II and }\forall_{p\in\sigma}\;Q(p)\cap F(f,x)\neq\emptyset\}\\
	&=&\{(f,x,\sigma)\in\mathcal C(Z,G)\times 2^{\omega}\times\textnormal{Tr}\colon\\
	&&\;\;\sigma\subseteq T\textnormal{ is a strategy for Player II and }\forall_{p\in\sigma}\;\exists_{(y,z)\in Q(p)}\; (x,x,f(y),z)\in F\},
	\end{eqnarray*}
which is clearly an analytic set.
\end{proof}

\begin{proposition}\label{phi}
Let either $\Gamma=\Delta^1_1$ and $\Lambda=\Sigma^0_{\xi}$ for some $1\le\xi<\omega_1$, or let $\Gamma=\Pi^1_1$ and $\Lambda=\Delta^1_1$.
Let $G$ be a Polish group.
	Then there exists a partial function $\phi\colon\mathcal C(C,G)\times2^{\omega}\rightarrow G$ such that $\textnormal{graph}(\phi)\in\Gamma$, and such that for every $f\in\mathcal C(C,G)$ it holds
	\begin{itemize}
		\item[(i)] $\forall x\in 2^{\omega}\colon (f,x)\in\textnormal{dom}(\phi)\Rightarrow \phi(f,x)\in f(C)$,
		\item[(ii)] $\forall S\in\Lambda(2^{\omega}\times G)\colon\textnormal{graph}(\phi_f)\subseteq S\Rightarrow$ there is $x\in2^{\omega}$ such that $f^{-1}(S_x)$ is non-meager in $C$.
	\end{itemize}
\end{proposition}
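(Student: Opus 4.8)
The plan is to follow the strategy of \cite[Theorem~3.1]{EV}: build $\phi$ out of a universal set for $\Lambda$ and arrange, by a diagonal argument, that the graph of each section $\phi_f$ cannot be swallowed by any $\Lambda$-set all of whose relevant sections are $f$-meager. The two lemmas above are then exactly what is needed to control the descriptive complexity of $\textnormal{graph}(\phi)$.

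Concretely, I would first fix a universal set. In the case $\Gamma=\Delta^1_1$, $\Lambda=\Sigma^0_{\xi}$, let $U\in\Sigma^0_{\xi}(2^{\omega}\times 2^{\omega}\times G)$ be universal for $\Sigma^0_{\xi}(2^{\omega}\times G)$. In the case $\Gamma=\Pi^1_1$, $\Lambda=\Delta^1_1$ there is no Borel universal set, so I would take instead $U\in\Sigma^1_1(2^{\omega}\times 2^{\omega}\times G)$ universal for $\Sigma^1_1(2^{\omega}\times G)$ and exploit that, since $\Delta^1_1\subseteq\Sigma^1_1$, every Borel subset of $2^{\omega}\times G$ still occurs as a section $U_y$. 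Writing $U_{y,y}=\{g\in G\colon (y,y,g)\in U\}$, I would set
$$\textnormal{dom}(\phi)=\{(f,y)\in\mathcal C(C,G)\times 2^{\omega}\colon f^{-1}(U_{y,y})\textnormal{ is meager in }C\}$$
and, for $(f,y)$ in this set, let $\phi(f,y)$ be any point of $f(C)\setminus U_{y,y}$. Such a point exists because $C\setminus f^{-1}(U_{y,y})$ is then comeager, hence nonempty, and the image under $f$ of any of its points lies in $f(C)$ but outside $U_{y,y}$; this also gives (i) at once.

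For (ii), suppose $S\in\Lambda(2^{\omega}\times G)$ satisfies $\textnormal{graph}(\phi_f)\subseteq S$ and use universality to pick $y$ with $S=U_y$. I claim $x=y$ is a witness. If $(f,y)\in\textnormal{dom}(\phi)$, then $(y,\phi(f,y))\in\textnormal{graph}(\phi_f)\subseteq S=U_y$, forcing $\phi(f,y)\in U_{y,y}$ and contradicting the choice $\phi(f,y)\in f(C)\setminus U_{y,y}$. Hence $(f,y)\notin\textnormal{dom}(\phi)$, which by definition means that $f^{-1}(S_y)=f^{-1}(U_{y,y})$ is non-meager in $C$, as required.

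It remains to place $\textnormal{graph}(\phi)$ in $\Gamma$, and this is where the two lemmas and a uniformization are used. In the first case, applying Lemma~\ref{BorelOnBorel} to the Borel set $\{(y,f,g)\colon (y,y,g)\in U\}$ (with parameter space $Z=2^{\omega}$ and $V=C$) shows that $\textnormal{dom}(\phi)$ is Borel; moreover the Borel set $\{(f,y,c)\colon (f,y)\in\textnormal{dom}(\phi),\ f(c)\notin U_{y,y}\}$ has comeager sections over $\textnormal{dom}(\phi)$, so a standard Baire-category (Borel) uniformization \cite{Kechris} yields a Borel map $c(f,y)$, and $\phi(f,y)=f(c(f,y))$ is then Borel, i.e.\ in $\Delta^1_1$. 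In the second case Lemma~\ref{coanalytic}, applied with $Z=C$ and $V=U$, shows that $\{(f,y)\colon f^{-1}(U_{y,y})\textnormal{ is non-meager in }C\}$ is $\Sigma^1_1$, so $\textnormal{dom}(\phi)$ is $\Pi^1_1$; since $\{(f,g)\colon g\in f(C)\}$ is closed and $\{(f,y,g)\colon g\notin U_{y,y}\}$ is $\Pi^1_1$, the relation $R=\{(f,y,g)\colon (f,y)\in\textnormal{dom}(\phi),\ g\in f(C),\ g\notin U_{y,y}\}$ is $\Pi^1_1$, and uniformizing $R$ by Kondô's theorem \cite{Kechris} produces a $\phi$ with $\Pi^1_1$ graph. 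The main obstacle is precisely this last, second-case complexity bookkeeping: the sections $f(C)\setminus U_{y,y}$ are only coanalytic, so a naive selection would raise the complexity, and one must uniformize directly at the level of $G$ (rather than composing a selection on $C$ with evaluation, which would break the $\Pi^1_1$ bound). Once the graph is in the correct class, properties (i) and (ii) above complete the proof.
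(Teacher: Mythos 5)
Your proof is correct, and it is a dual (mirror-image) version of the paper's argument rather than the same bookkeeping. The paper takes $U$ universal for the \emph{dual} class ($\Pi^0_{\xi}$, resp.\ $\Pi^1_1$), puts $\textnormal{dom}(\phi)$ where $f^{-1}(U_{x,x})$ is \emph{non-meager}, selects $\phi(f,x)\in U_{x,x}\cap f(C)$, and in the diagonal step plugs the complement of $S$ into $U$; you take $U$ universal for $\Lambda$ itself (resp.\ for $\Sigma^1_1\supseteq\Delta^1_1$), put $\textnormal{dom}(\phi)$ where $f^{-1}(U_{y,y})$ is \emph{meager}, select $\phi(f,y)\in f(C)\setminus U_{y,y}$, and diagonalize against $S$ directly. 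Both arguments rest on the same three ingredients (a universal set, Lemmas \ref{BorelOnBorel} and \ref{coanalytic} to control the complexity of the meager/non-meager condition, and a uniformization theorem), so the skeleton is identical, but your orientation buys a small simplification in the Borel case: since your selection takes place inside $C$ (pick $c$ from the comeager set $C\setminus f^{-1}(U_{y,y})$, then evaluate $f$), you only need the classical large-section uniformization for the \emph{fixed} meager ideal on $C$ together with the first part of Lemma \ref{BorelOnBorel}, whereas the paper uniformizes $\tilde U$ inside $G$ and therefore needs the second part of Lemma \ref{BorelOnBorel} (that $f\mapsto\{B\subseteq G\colon f^{-1}(B)\textnormal{ meager}\}$ is Borel on Borel) to invoke \cite[Theorem~18.6]{Kechris}. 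Conversely, in the $\Pi^1_1$ case the paper's orientation forces it to compute the complexity of the non-meager condition for $\Pi^1_1$ sections via a basis decomposition and Lemma \ref{coanalytic} applied to $\sim U$, while you apply Lemma \ref{coanalytic} to $U$ directly. You also correctly flagged the one genuine trap: in the $\Pi^1_1$ case the selection must be uniformized at the level of $G$ by Kond\^o's theorem, since composing a selection on $C$ with evaluation would express the graph as a projection of a $\Pi^1_1$ set and lose the complexity bound.
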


\begin{proof}
	Let $U\subseteq2^{\omega}\times2^{\omega}\times G$ be a set which is
	\begin{itemize}
		\item universal for $\Pi^0_{\xi}(2^{\omega}\times G)$ if [$\Gamma=\Delta^1_1$ and $\Lambda=\Sigma^0_{\xi}$ for some $1\le\xi<\omega_1$];
		\item universal for $\Pi^1_1(2^{\omega}\times G)$ if [$\Gamma=\Pi^1_1$ and $\Lambda=\Delta^1_1$].
	\end{itemize}
	We put
	$$\tilde U:=\{(f,x,g)\in\mathcal C(C,G)\times2^{\omega}\times G\colon f^{-1}(U_{x,x})\textnormal{ is non-meager in }C, g\in U_{x,x}\cap f(C)\}.$$
	
	\begin{claim}
		The set $\tilde U$ is from the class $\Gamma$.
	\end{claim}
	
	\begin{proof}
	It is easy to see that the set $\{(f,g)\in\mathcal C(C,G)\times G\colon g\in f(C)\}$ is closed.
	The set $U$ is in $\Gamma$ by its definition, and so the set $\{(x,g)\in2^{\omega}\times G\colon g\in U_{x,x}\}$ is also in $\Gamma$.
	Putting these two facts together we get that the set
$$\{(f,x,g)\in\mathcal C(C,G)\times2^{\omega}\times G\colon g\in U_{x,x}\cap f(C)\}$$
is in $\Gamma$. 
	So it remains to show that the set
$$\{(f,x)\in\mathcal C(C,G)\times2^{\omega}\colon f^{-1}(U_{x,x})\textnormal{ is non-meager in }C\}$$
is in $\Gamma$ as well.

In the case of $\Gamma=\Delta^1_1$ it suffices to use (the first part of) Lemma \ref{BorelOnBorel} on the Borel set
$$A:=\{(x,f,g)\in 2^{\omega}\times\mathcal C(C,G)\times G\colon (x,x,g)\in U\}\;,$$
as for every $x\in2^{\omega}$ and every $f\in\mathcal C(C,G)$ it holds $f^{-1}(A_{x,f})=f^{-1}(U_{x,x})$.

In the case of $\Gamma=\Pi^1_1$ let $\mathcal B$ be a countable basis of nonempty open sets in $C$. Then (using \cite[Proposition 8.27]{Kechris} in the first equality) we have
	\begin{eqnarray*}
	&\{(f,x)\in\mathcal C(C,G)\times2^{\omega}\colon f^{-1}(U_{x,x})\textnormal{ is non-meager in }C\}\\
	=&\bigcup\limits_{B\in\mathcal B}\{(f,x)\in\mathcal C(C,G)\times2^{\omega}\colon f^{-1}(U_{x,x})\cap B\textnormal{ is comeager in }B\}\\
	=&\sim\bigcap\limits_{B\in\mathcal B}\{(f,x)\in\mathcal C(C,G)\times2^{\omega}\colon f^{-1}(\sim U_{x,x})\cap B\textnormal{ is non-meager in }B\}.
	\end{eqnarray*}
Therefore it is enough to show that for every $B\in\mathcal B$, the set
\begin{eqnarray*}
&\{(f,x)\in\mathcal C(C,G)\times2^{\omega}\colon f^{-1}(\sim U_{x,x})\cap B\textnormal{ is non-meager in }B\}\\
=&\{(f,x)\in\mathcal C(C,G)\times2^{\omega}\colon \left(f\rest_{B}\right)^{-1}((\sim U)_{x,x})\textnormal{ is non-meager in }B\}
\end{eqnarray*}
is in $\Sigma^1_1$. But this easily follows from Lemma \ref{coanalytic} used on the $\Sigma^1_1$ set $V:=\sim U$.
	\end{proof}
	
Whenever $f\in\mathcal C(C,G)$ and $x\in2^{\omega}$ are such that $f^{-1}(U_{x,x})$ is non-meager in $C$ then $\tilde U_{f,x}=U_{x,x}\cap f(C)$. On the other hand, if $f\in\mathcal C(C,G)$ and $x\in2^{\omega}$ are such that $f^{-1}(U_{x,x})$ is meager in $C$ then $\tilde U_{f,x}=\emptyset$. It follows that for every $(f,x)\in\textnormal{proj}_{\mathcal C(C,G)\times 2^{\omega}}\tilde U$ we have $f^{-1}(\tilde U_{f,x})$ is non-meager in $C$.
This enables us to define $\phi$ as the uniformization of the set $\tilde U\subseteq(\mathcal C(C,G)\times 2^{\omega})\times G$ such that $\textnormal{graph}(\phi)\in\Gamma$. Indeed, in the case of $\Gamma=\Delta^1_1$ the existence of such a uniformization follows by (the second part of) Lemma~\ref{BorelOnBorel} and by [Kechris, Theorem~18.6]. And in the case of $\Gamma=\Pi^1_1$ the existence follows by the unformization property of the class $\Pi^1_1$ (see \cite[Theorem~36.14]{Kechris}).

We have $\textnormal{dom}(\phi)=\{(f,x)\in\mathcal C(C,G)\times2^{\omega}\colon f^{-1}(U_{x,x})\textnormal{ is non-meager in }C\}$, and $\phi(f,x)\in U_{x,x}\cap f(C)$ for every $(f,x)\in\textnormal{dom}(\phi)$. In particular, the partial function $\phi$ satisfies (i) for every $f\in\mathcal C(C,G)$.
	
Next we fix $f\in\mathcal C(C,G)$, and we verify (ii). Assume towards a contradiction that there is $S\in\Lambda(2^{\omega}\times G)$ such that $\textnormal{graph}(\phi_f)\subseteq S$ and for every $x\in2^{\omega}$ the set $f^{-1}(S_x)$ is meager in $C$. By the universality of $U$, there is $x\in2^{\omega}$ such that $U_x=(2^{\omega}\times G)\setminus S$. Then $f^{-1}(U_{x,x})=f^{-1}(G\setminus S_x)=C\setminus f^{-1}(S_x)$ is non-meager in $C$ (even comeager in $C$), and so $(f,x)\in\textnormal{dom}(\phi)$ and $\phi(f,x)\in U_{x,x}\cap f(C)\subseteq U_{x,x}=G\setminus S_x$. On the other hand, the inclusion $\textnormal{graph}(\phi_f)\subseteq S$ implies that $\phi(f,x)\in S_x$ which is a contradiction.
\end{proof}

\begin{proposition}\label{PosunutiKompaktu}
Let $G$ be a non-locally compact abelian Polish group and let $K\in\mathcal K(G)$ be fixed. Then there is a Borel map $u\colon\mathcal C(C,G)\times2^{\omega}\times2^{\omega}\rightarrow G$ such that
	\begin{itemize}
		\item[(A)] if $(f,x,y)\neq(f',x',y')$ are elements of $\mathcal C(C,G)\times2^{\omega}\times2^{\omega}$ then
		$$(f(C)-K+u(f,x,y))\cap(f'(C)-K+u(f',x',y'))=\emptyset\;;$$
		\item[(B)] for every $f\in\mathcal C(C,G)$ and $y\in2^{\omega}$ the map $u(f,\cdot,y)$ is continuous.
	\end{itemize}
\end{proposition}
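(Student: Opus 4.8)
The plan is to realise $u$ as a \emph{disjointification} of the continuously varying family of compact sets $L(f,x,y):=f(C)-K$, and to deduce the statement from \cite[Proposition~3.5]{EV} by checking that its construction yields both the global disjointness in (A) and the fibrewise continuity in (B). First I would reformulate (A): since $G$ is abelian, the identity $(f(C)-K+u(f,x,y))\cap(f'(C)-K+u(f',x',y'))=\emptyset$ is equivalent to $u(f,x,y)-u(f',x',y')\notin(f'(C)-f(C))+(K-K)$. Thus the whole task is to choose the translations so that all these differences avoid the (compact) forbidden sets, while keeping $u$ Borel and continuous in the middle coordinate. Note that $L$ is a continuous map from the Polish space $P:=\mathcal C(C,G)\times 2^\omega\times 2^\omega$ into $\mathcal K(G)$ (it factors through $f\mapsto f(C)-K$, which is Vietoris-continuous because uniform convergence of functions forces Hausdorff convergence of the images of $C$), so the input is exactly of the form handled by the disjointification machinery.

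The engine of the construction is the single topological fact that in a non-locally compact Polish group every compact set is nowhere dense; hence for any compact $L$ and any nonempty open $U$ the set $U\setminus(L-L)$ is again nonempty open, and one can repeatedly extract points $g_1,g_2,\dots\in U$ whose translates $L+g_i$ are pairwise disjoint (at each step $\bigcup_i(L-L+g_i)$ is compact, hence nowhere dense, so there is room for the next point). I would turn this ``room'' property into a map via a Lusin scheme. For a fixed fibre (fixed $f$ and $y$) the compact set $f(C)-K$ is constant, so I would build a Cantor scheme $(g_s)_{s\in 2^{<\omega}}$ of group elements, splitting far enough at each node (using the room property for the single compact $f(C)-K$) that copies over incomparable nodes are disjoint, while forcing $\diam$ to shrink fast enough along branches that $x\mapsto g_{x\rest n}$ converges. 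The limit gives a map $x\mapsto u(f,x,y)$ that is continuous, yielding (B), and has pairwise disjoint copies in $x$. Within one fibre there is no unboundedness to fight, so this part is routine.

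The substantive step is gluing these fibrewise maps over the parameter $(f,y)$ into a single Borel $u$ whose copies are disjoint across \emph{all} fibres simultaneously. Here the difficulty is that $\diam(f(C)-K)$ is unbounded as $f$ ranges over $\mathcal C(C,G)$, so there is no uniform separation distance and one cannot simply place fibres at far-apart base points. I would resolve this by running a second Lusin scheme in the parameter space $\mathcal C(C,G)\times 2^\omega$: decompose it into countably many Borel pieces per level on each of which $\diam(f(C)-K)$ is bounded, and assign base translations node by node so that at every node only countably many boundedly-sized blocks must be separated, which the room property again permits. Composing the two schemes produces a Borel $u$ on $P$ that is continuous in $x$ and has pairwise disjoint translates.

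The main obstacle I anticipate is precisely this simultaneous control: maintaining global disjointness of the blocks while (a) the blocks have unbounded size in the $f$-direction, (b) the assignment stays Borel, and (c) continuity in $x$ survives the gluing. This is exactly the bookkeeping carried out in \cite[Proposition~3.5]{EV}; once one observes that $L(f,x,y)=f(C)-K$ is a legitimate continuously varying compact family and that their construction never invokes a uniform diameter bound, the statement follows by applying their proposition to this family and reading off (A) and (B) directly from its conclusion.
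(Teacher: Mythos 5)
There is a genuine gap, and it is not where you locate the difficulty. Your final reduction --- ``apply \cite[Proposition~3.5]{EV} to the family $L(f,x,y)=f(C)-K$ and read off (A) and (B) directly from its conclusion'' --- amounts to setting $u(f,x,y):=t(f(C),x,y)$, where $t\colon\mathcal K(G)\times2^{\omega}\times2^{\omega}\rightarrow G$ is the map given by that proposition. But the disjointness conclusion of \cite[Proposition~3.5]{EV} is guaranteed only for \emph{distinct triples} in $\mathcal K(G)\times2^{\omega}\times2^{\omega}$, and the map $f\mapsto f(C)$ is far from injective: there are many pairs $f\neq f'$ in $\mathcal C(C,G)$ with $f(C)=f'(C)$. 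For such a pair, taking $x=x'$ and $y=y'$, we have $(f,x,y)\neq(f',x',y')$ while the triples $(f(C),x,y)$ and $(f'(C),x',y')$ coincide; then $u(f,x,y)=u(f',x',y')$, and the two sets in (A) are equal and nonempty, hence not disjoint. So condition (A) genuinely fails for this $u$. This collision problem --- and not the unboundedness of $\diam(f(C)-K)$, which is a non-issue precisely because EV's proposition already quantifies over all of $\mathcal K(G)$ --- is the one obstacle that must be dealt with, and your proposal never addresses it.

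The paper's proof resolves it with a short trick that is the real content of the argument: fix a Borel bijection $\theta\colon\mathcal C(C,G)\times2^{\omega}\rightarrow2^{\omega}$ (which exists since both sides are uncountable Polish spaces) and set $u(f,x,y):=t(f(C),x,\theta(f,y))$. Now distinct inputs $(f,x,y)\neq(f',x',y')$ always produce distinct triples for $t$: if $(f,y)\neq(f',y')$ then $\theta(f,y)\neq\theta(f',y')$, and otherwise $x\neq x'$. Hence (A) follows from EV's disjointness condition, while (B) and Borelness survive because for fixed $(f,y)$ the first and third arguments fed to $t$ are constants and $f\mapsto f(C)$ is continuous. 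Your fallback plan --- re-proving a parametrized version of EV's disjointification via a second Lusin scheme over $(f,y)$ --- would run into the same identification problem and, as sketched, does not work anyway: after decomposing the parameter space into countably many pieces on which $\diam(f(C)-K)$ is bounded, each piece still contains uncountably many fibres that must be separated from one another, so assigning base translations ``node by node'' to countably many blocks cannot distinguish them. The folding of $f$ into the spare $2^{\omega}$ coordinate via $\theta$ is exactly the missing idea.
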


\begin{proof}
	By \cite[Proposition 3.5]{EV} there is a Borel map $t\colon\mathcal K(G)\times2^{\omega}\times2^{\omega}\rightarrow G$ such that
	\begin{itemize}
		\item[($\tilde{\textnormal A}$)] if $(L,x,y)\neq(L',x',y')$ are elements of $\mathcal K(G)\times2^{\omega}\times2^{\omega}$ then
		$$(L-K+t(L,x,y))\cap(L'-K+t(L',x',y'))=\emptyset\;;$$
		\item[($\tilde{\textnormal B}$)] for every $L\in\mathcal K(G)$ and $y\in2^{\omega}$ the map $t(L,\cdot,y)$ is continuous.
	\end{itemize}
	Let us fix a Borel bijection $\theta\colon \mathcal C(C,G)\times2^{\omega}\rightarrow 2^{\omega}$.
	For $(f,x,y)\in\mathcal C(C,G)\times2^{\omega}\times2^{\omega}$ we define $u(f,x,y):=t(f(C),x,\theta(f,y))$. This map clearly works.
\end{proof}

The following theorem is the main result of this chapter.

\begin{theorem}\label{MainResult}
Let either $\Gamma=\Delta^1_1$ and $\Lambda=\Sigma^0_{\xi}$ for some $1\le\xi<\omega_1$, or let $\Gamma=\Pi^1_1$ and $\Lambda=\Delta^1_1$.
	Let $G$ be a non-locally compact abelian Polish group. Then there is a strongly Haar meager (naively strongly Haar meager in the case of $\Gamma=\Pi^1_1$) set $E\in\Gamma(G)$ without any Haar meager hull in $\Lambda(G)$.
\end{theorem}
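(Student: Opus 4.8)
The plan is to follow the scheme of \cite[Theorem~4.1]{EV}, fusing the partial uniformization $\phi$ from Proposition~\ref{phi} with the separating translations $u$ from Proposition~\ref{PosunutiKompaktu} into one set $E$. First I would fix, once and for all, a compact set $K\in\mathcal K(G)$ that is perfect (a homeomorphic copy of the Cantor set sitting inside $G$, which exists since the non-locally-compact $G$ is non-discrete) and contains the neutral element $0$; this same $K$ will be the fixed compact set in Proposition~\ref{PosunutiKompaktu} and the witness of (naive) strong Haar meagerness. Taking $\phi$ and $u$ associated with this $K$, I would set $p(f,x,y):=\phi(f,x)+u(f,x,y)$ and define
$$E:=\{p(f,x,y)\colon (f,x)\in\textnormal{dom}(\phi),\ y\in2^{\omega}\}\subseteq G.$$
Since $\phi(f,x)\in f(C)$ by (i), we have $p(f,x,y)-K\subseteq f(C)-K+u(f,x,y)$, and (A) makes the latter sets pairwise disjoint; hence the sets $p(f,x,y)-K$ are pairwise disjoint and $(f,x,y)\mapsto p(f,x,y)$ is injective.

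Next I would pin down the complexity of $E$. Using $\textnormal{graph}(\phi)\in\Gamma$ and that $u$ is Borel, the set $R:=\{(f,x,y,g)\colon(f,x,g-u(f,x,y))\in\textnormal{graph}(\phi)\}$ is a Borel preimage of $\textnormal{graph}(\phi)$, so $R\in\Gamma$, and $E$ is its projection to $G$, on which the projection $\pi$ is injective by the previous paragraph. In the Borel case this suffices: $E$ is an injective Borel image of a Borel set, hence Borel by the Lusin--Suslin theorem. In the case $\Gamma=\Pi^1_1$ I would detour through the auxiliary Borel set $R':=\{(f,x,y,g)\colon g\in f(C)-K+u(f,x,y)\}$, which contains $R$ (this is where $0\in K$ enters) and on which $\pi$ is again injective by (A). Then $\pi(R')$ is Borel, $\pi(R'\setminus R)$ is analytic (a continuous image of the $\Sigma^1_1$ set $R'\setminus R$), and injectivity on $R'$ yields $E=\pi(R)=\pi(R')\setminus\pi(R'\setminus R)$, so $E\in\Pi^1_1$.

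To obtain (naive) strong Haar meagerness with witness $K$, fix $t\in G$ and suppose $p+t,p'+t\in K$ for two points $p=p(f,x,y)$, $p'=p(f',x',y')$ of $E$; then $-t$ lies in both $p-K$ and $p'-K$, so $(f,x,y)=(f',x',y')$ by disjointness. Thus $(E+t)\cap K$ has at most one point and is therefore meager in the perfect compact space $K$. In the Borel case $E$ is its own Borel hull, so $E$ is strongly Haar meager; in the case $\Gamma=\Pi^1_1$ this already shows that $E$ is naively strongly Haar meager.

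Finally I would exclude a Haar meager hull in $\Lambda$. Assume $E\subseteq B$ with $B\in\Lambda(G)$ Haar meager; by the Proposition opening this chapter its Haar meagerness is witnessed by some $f_0\in\mathcal C(C,G)$, i.e.\ $f_0^{-1}(B+t)$ is meager in $C$ for every $t\in G$. Fixing any $y_0\in2^{\omega}$ and using that $x\mapsto u(f_0,x,y_0)$ is continuous by (B), the set $S:=\{(x,g)\colon g+u(f_0,x,y_0)\in B\}$ is a continuous preimage of $B$, hence $S\in\Lambda(2^{\omega}\times G)$; moreover $\textnormal{graph}(\phi_{f_0})\subseteq S$, since $p(f_0,x,y_0)\in E\subseteq B$ for every $(f_0,x)\in\textnormal{dom}(\phi)$. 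Property (ii) then gives some $x$ with $f_0^{-1}(S_x)$ non-meager in $C$; but $S_x=B-u(f_0,x,y_0)$ is a single translate of $B$, so $f_0^{-1}(S_x)$ is meager by the choice of $f_0$, a contradiction. I expect the main obstacle to be the descriptive bookkeeping in the coanalytic case, namely producing $E\in\Pi^1_1$ from an injective projection of a $\Pi^1_1$ set (handled above via the auxiliary Borel set $R'$) together with the verification that the slicing set $S$ remains in $\Lambda$, which is precisely where the continuity clause (B) of Proposition~\ref{PosunutiKompaktu} is indispensable.
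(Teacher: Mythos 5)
Your proof is correct and follows the same skeleton as the paper's: fuse Proposition~\ref{phi} with Proposition~\ref{PosunutiKompaktu}, use the disjointness condition (A) both for the complexity of $E$ (via Lusin--Suslin) and for the fact that every translate of $E$ meets the perfect compact $K$ in at most one point, and use property (ii) to contradict the existence of a hull in $\Lambda$. The one genuine difference is the handling of the third coordinate of $u$: the paper fixes a Borel bijection $c\colon\mathcal C(C,G)\rightarrow2^{\omega}$ and defines $E=\{\phi(f,x)+u(f,x,c(f))\colon(f,x)\in\textnormal{dom}(\phi)\}$, one point per pair $(f,x)$, so that in the hull argument the witnessing function $f$ arrives tagged with its own translation $u(f,\cdot,c(f))$; you instead include the points $\phi(f,x)+u(f,x,y)$ for \emph{all} $y\in2^{\omega}$, which lets you run the hull argument with an arbitrary fixed $y_0$ (since $p(f_0,x,y_0)\in E$ automatically) and so dispenses with the bijection $c$ entirely, at the price of a larger $E$ --- harmless, because condition (A) is stated for full triples and still yields the one-point intersections with translates of $K$. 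Your explicit two-step computation in the coanalytic case, $E=\pi(R')\setminus\pi(R'\setminus R)$ with $\pi$ injective on the Borel set $R'\supseteq R$, is precisely the content that the paper compresses into the one-line assertion that $\Psi$ is a Borel isomorphism between $D$ and $\Psi(D)$, with $0\in K$ playing the same role in both versions; spelling it out is a welcome addition, since that is exactly the point where the $\Pi^1_1$ case needs an actual argument rather than a direct appeal to Lusin--Suslin.
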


\begin{proof}
	We fix a Borel bijection $c\colon\mathcal C(C,G)\rightarrow2^{\omega}$ and a perfect compact set $K\in\mathcal K(G)$ such that $0\in K$. Let $\phi\colon\mathcal C(C,G)\times2^{\omega}\rightarrow G$ be the partial function from Proposition \ref{phi}, and let $u\colon\mathcal C(C,G)\times2^{\omega}\times2^{\omega}\rightarrow G$ be the map from Proposition \ref{PosunutiKompaktu}. Let $\Psi\colon\mathcal C(C,G)\times2^{\omega}\times G\rightarrow G$ be defined by $\Psi(f,x,g)=g+u(f,x,c(f))$. We put $E:=\Psi(\textnormal{graph}(\phi))$.
	
	We first prove that $E$ is in the class $\Gamma$. The map $\Psi$ is clearly Borel. Let $D:=\{(f,x,g)\in\mathcal C(C,G)\times2^{\omega}\times G\colon g\in f(C)\}$, then $D$ is a Borel set (even closed). Let $(f,x,g)$ and $(f',x',g')$ be two distinct elements of $D$.
	If $(f,x)\neq(f',x')$ then by condition (A) from Proposition \ref{PosunutiKompaktu} we have $\Psi(f,x,g)\neq\Psi(f',x',g')$ (as $g\in f(C)$, $g'\in f'(C)$ and $0\in K$).
	And if $(f,x)=(f',x')$ and $g\neq g'$ then it clearly holds $\Psi(f,x,g)\neq\Psi(f',x',g')$ as well.
	It follows that $\Psi$ is injective on the set $D$, and so it is a Borel isomorphism of the Borel sets $D$ and $\Psi(D)$. By condition (i) from Proposition \ref{phi} we have $\textnormal{graph}(\phi)\subseteq D$. Moreover $\textnormal{graph}(\phi)$ is in the class $\Gamma$ (again by Proposition \ref{phi}), and so it follows that $E=\Psi(\textnormal{graph}(\phi))$ is also in the class $\Gamma$.
	
	Next we show that $E$ is strongly Haar meager (naively strongly Haar meager in the case of $\Gamma=\Pi^1_1$), witnessed by the compact $K$. We have
	\begin{align*}
	E&=\{\Psi(f,x,\phi(f,x))\colon(f,x)\in\textnormal{dom}(\phi)\}\\
	&=\{\phi(f,x)+u(f,x,c(f))\colon(f,x)\in\textnormal{dom}(\phi)\}\\
	&\subseteq\bigcup\limits_{(f,x)\in\textnormal{dom}(\phi)}\left(f(C)+u(f,x,c(f))\right)
	\end{align*}
	where the sets from the last union are pairwise disjoint by condition (A) from Proposition \ref{PosunutiKompaktu}. It follows that for every $(f,x)\in\textnormal{dom}(\phi)$, the intersection of $E$ and $f(C)+u(f,x,c(f))$ is a singleton $\{\phi(f,x)+u(f,x,c(f))\}$. Let us fix $g\in G$. Then $K-g$ intersects at most one of the sets $f(C)+u(f,x,c(f))$ (otherwise $-g$ would be in the intersection of two distinct sets of the form $f(C)-K+u(f,x,c(f))$ which would contradict Proposition \ref{PosunutiKompaktu}). Therefore $K-g$ intersects $E$ in at most one point, and so $E+g$ intersects $K$ in at most one point. Thus $E+g$ is meager in $K$, as $K$ is perfect.
	
	Finally, we show that there is no Haar meager hull of $E$ in $\Lambda(G)$. Suppose for a contradiction that $H$ is such a hull and let $f\colon C\rightarrow G$ be the witnessing continuous function. By condition (B) from Proposition \ref{PosunutiKompaktu} we easily have that the map $\Psi_f=\Psi(f,\cdot,\cdot)$ is continuous. Thus $S:=\Psi_f^{-1}(H)\in\Lambda(2^{\omega}\times G)$. For every $(x,g)\in\textnormal{graph}(\phi_f)$, we have $\Psi_f(x,g)=\phi(f,x)+u(f,x,c(f))\in E\subseteq H$, and so $\textnormal{graph}(\phi_f)\subseteq S$. By condition (ii) from Proposition \ref{phi} there is $x\in2^{\omega}$ such that $f^{-1}(S_x)$ is non-meager in $C$. But we also have $S_x+u(f,x,c(f))=\Psi_{f,x}(S_x)\subseteq\Psi_f(S)\subseteq H$, and so $f^{-1}(H-u(f,x,c(f)))\supseteq f^{-1}(S_x)$ is non-meager in $C$. This is a contradiction with the fact that $f$ is a witnessing function for $H$ being Haar meager.
\end{proof}

As it was already noted in \cite{EV} in case of Haar null sets, Theorem \ref{MainResult} has the following easy consequence concerning the additivity of the $\sigma$-ideal of all (strongly) Haar meager sets.

\begin{corollary}
Let $G$ be a non-locally compact abelian Polish group. Then the additivity of the $\sigma$-ideal of all (strongly) Haar meager subsets of $G$ is $\omega_1$.
\end{corollary}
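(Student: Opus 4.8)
The plan is to establish the two inequalities $\operatorname{add}(I)\ge\omega_1$ and $\operatorname{add}(I)\le\omega_1$, where $I$ denotes either of the two $\sigma$-ideals in the statement. The lower bound is immediate: being a $\sigma$-ideal, $I$ is closed under countable unions, so no countable subfamily of $I$ can have its union outside $I$, whence $\operatorname{add}(I)\ge\omega_1$. (For Haar meager sets the $\sigma$-ideal property is Darji's theorem; for strongly Haar meager sets it is the closure property implicit in the statement.) All the work therefore lies in the upper bound, for which I would produce a family of exactly $\omega_1$ members of $I$ whose union escapes $I$.

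The witness is the set $E$ supplied by Theorem~\ref{MainResult} in the coanalytic case $\Gamma=\Pi^1_1$, $\Lambda=\Delta^1_1$: this $E$ is naively strongly Haar meager yet has no Haar meager hull in $\Delta^1_1(G)$. The decisive observation is that $E$ itself is therefore \emph{not} Haar meager: by Definition~\ref{definition HM}(i) every Haar meager set is contained in a Borel Haar meager set, so were $E$ Haar meager it would possess a Borel (hence $\Delta^1_1$) Haar meager hull, contradicting Theorem~\ref{MainResult}. A fortiori $E$ is not strongly Haar meager. Thus $E\notin I$ in both cases, while $E$ still remains small in the naive sense.

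It remains to split $E$ into $\omega_1$ members of $I$. Since $E\in\Pi^1_1(G)$, I would invoke the standard $\omega_1$-Borel decomposition of coanalytic sets: fixing a $\Pi^1_1$-rank $\varphi$ on $E$, the initial segments $E_\alpha:=\{x\in E\colon\varphi(x)\le\alpha\}$ are Borel and increase to $E$, so $E=\bigcup_{\alpha<\omega_1}E_\alpha$. To see that each $E_\alpha\in I$, fix a compact $K\subseteq G$ witnessing that $E$ is naively strongly Haar meager, so that $(E+g)\cap K$ is meager in $K$ for every $g\in G$. Then for each $\alpha$ and each $g$ we have $(E_\alpha+g)\cap K\subseteq(E+g)\cap K$, which is again meager; since $E_\alpha$ is Borel it can serve as its own Borel hull, so $E_\alpha$ is strongly Haar meager, and in particular Haar meager. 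Hence $\{E_\alpha\colon\alpha<\omega_1\}$ is a family of $\omega_1$ sets lying in both $\sigma$-ideals whose union $E$ lies in neither, giving $\operatorname{add}(I)\le\omega_1$ and, together with the lower bound, $\operatorname{add}(I)=\omega_1$.

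The one point requiring care is the correct invocation of Theorem~\ref{MainResult}: one must take precisely the $\Pi^1_1$ case, so that $E$ is simultaneously coanalytic (which makes the $\omega_1$-Borel decomposition available) and free of a Borel Haar meager hull (which places $E$ outside the ideal), while its naive strong Haar meagerness is exactly the property that descends to every Borel subset $E_\alpha$. Granting this, the two remaining ingredients---the $\omega_1$-Borel decomposition of a $\Pi^1_1$ set and the monotonicity of the meagerness condition under passing to subsets---are entirely routine.
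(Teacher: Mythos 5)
Your proof is correct, but it is genuinely different from the one in the paper. The paper stays entirely within the Borel case of Theorem~\ref{MainResult}: for each $1\le\xi<\omega_1$ it invokes the case $\Gamma=\Delta^1_1$, $\Lambda=\Sigma^0_{\xi}$ to get a Borel (strongly) Haar meager set $E_\xi$ with no Haar meager hull in $\Sigma^0_{\xi}$, and then observes that $\bigcup_{\xi<\omega_1}E_\xi$ has no Borel Haar meager hull at all, since any Borel hull would lie in some fixed class $\Sigma^0_{\xi}$ and would in particular be a hull of $E_\xi$. You instead make a single invocation of the harder coanalytic case $\Gamma=\Pi^1_1$, $\Lambda=\Delta^1_1$ (i.e.\ Theorem~\ref{MR2}), note that the resulting $E$ cannot be (strongly) Haar meager because any such set has a Borel Haar meager hull by Definition~\ref{definition HM}, and then decompose $E$ into an increasing $\omega_1$-chain of Borel sets via a $\Pi^1_1$-rank; each piece inherits the compact witness $K$ of naive strong Haar meagerness and, being Borel, is its own hull, hence is strongly Haar meager. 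Both arguments are sound and both leave the lower bound $\operatorname{add}\ge\omega_1$ to the $\sigma$-ideal property, exactly as the paper does implicitly. What the paper's route buys is self-containment and uniformity: it needs only the Borel instances of Theorem~\ref{MainResult} plus the fact that the Borel hierarchy has length $\omega_1$, and it treats the Haar meager and strongly Haar meager cases identically (each $E_\xi$ is strongly Haar meager by construction, not merely naively so). What your route buys is economy and extra structure: one application of the theorem suffices, and the witnessing family is an increasing chain of Borel sets whose union is a single coanalytic set---though at the price of importing the Lusin--Sierpi\'nski rank decomposition and of routing the strong case through the implication ``naively strongly Haar meager $+$ Borel $\Rightarrow$ strongly Haar meager'' applied to the pieces.
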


\begin{proof}
For every $1\le\xi<\omega_1$, let $E_{\xi}\subseteq G$ be a (strongly) Haar meager set without any Haar meager hull in $\Sigma^0_{\xi}$. Then $\bigcup\limits_{1\le\xi<\omega_1}E_{\xi}$ has no Borel Haar meager hull, and so it is not (strongly) Haar meager.
\end{proof}

The next two theorems are just special cases of Theorem \ref{MainResult} but we write them down explicitly as they were the main motivation for this chapter. Theorem \ref{MR1} answers in negative a question posed in \cite[Question~9]{DRVV}. Theorem \ref{MR2} is in a sharp contrast with \cite[Proposition~8]{DRVV} which states that every $\Sigma^1_1$ naively Haar meager set is Haar meager.

\begin{theorem}\label{MR1}
Let $G$ be a non-locally compact abelian Polish group. Then there is a strongly Haar meager set without any $F_{\sigma}$ Haar meager hull.
\end{theorem}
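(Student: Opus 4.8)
The plan is to obtain this statement as an immediate specialization of Theorem \ref{MainResult}. The only thing to observe is the standard identification of the class of $F_{\sigma}$ sets with $\Sigma^0_2$, the additive Borel class for $\xi=2$: recall that $\Pi^0_1$ consists of the closed sets, so $\Sigma^0_2$ consists precisely of the countable unions of closed sets, that is, the $F_{\sigma}$ sets. Thus I would simply apply Theorem \ref{MainResult} with the choice $\Gamma=\Delta^1_1$ and $\Lambda=\Sigma^0_2$ (so $\xi=2$ in the first alternative of that theorem).

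With these parameters Theorem \ref{MainResult} produces a set $E\in\Delta^1_1(G)$, i.e. a Borel set, that is strongly Haar meager and admits no Haar meager hull lying in $\Sigma^0_2(G)=F_{\sigma}(G)$. This is exactly what the theorem asks for: a strongly Haar meager set with no $F_{\sigma}$ Haar meager hull. Since the hypothesis that $G$ be a non-locally compact abelian Polish group is common to both statements, nothing further needs to be checked, and the Borel set $E$ furnished by Theorem \ref{MainResult} serves directly as the desired witness.

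There is essentially no obstacle here, as all the genuine content has already been absorbed into Theorem \ref{MainResult} (and, through it, into Propositions \ref{phi} and \ref{PosunutiKompaktu}). The present theorem is singled out only because the case $\xi=2$ was the original motivation for the chapter, answering \cite[Question~9]{DRVV}; the proof is a one-line invocation of the general result with the correspondence $F_{\sigma}=\Sigma^0_2$ made explicit.
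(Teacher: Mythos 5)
Your proof is correct and matches the paper exactly: the paper itself treats Theorem \ref{MR1} as an immediate special case of Theorem \ref{MainResult} with $\Gamma=\Delta^1_1$ and $\Lambda=\Sigma^0_2$, using precisely the identification $\Sigma^0_2=F_\sigma$ that you spell out. Nothing is missing.
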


\begin{theorem}\label{MR2}
Let $G$ be a non-locally compact abelian Polish group. Then there is a coanalytic naively strongly Haar meager set without any Haar meager hull.
\end{theorem}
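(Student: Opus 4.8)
The plan is simply to specialize Theorem \ref{MainResult}. I would invoke it with the second admissible choice of parameters, namely $\Gamma=\Pi^1_1$ and $\Lambda=\Delta^1_1$. Since $G$ is a non-locally compact abelian Polish group, this immediately yields a set $E\in\Pi^1_1(G)$ (that is, coanalytic) which is naively strongly Haar meager and which admits no Haar meager hull lying in $\Delta^1_1(G)$, i.e. no \emph{Borel} Haar meager hull. So two of the three required properties are handed to us directly, and only the strengthening of the negative hull statement needs a separate (short) argument.

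The remaining task is to upgrade ``no Borel Haar meager hull'' to ``no Haar meager hull whatsoever,'' and the key observation is that the Borel hull is already built into the very definition of Haar meagerness. Concretely, suppose towards a contradiction that some Haar meager set $H$ satisfies $E\subseteq H$. By part (i) of Definition \ref{definition HM} there are a Borel set $B\supseteq H$, a compact metric space $K$, and a continuous map $f\colon K\rightarrow G$ such that $f^{-1}(gBh)$ is meager in $K$ for all $g,h\in G$. Then $B$ is itself Borel and Haar meager, witnessed by the very same $K$ and $f$ (since $B\subseteq B$), and moreover $E\subseteq H\subseteq B$. Hence $B$ is a Borel Haar meager hull of $E$, i.e. a Haar meager hull in $\Delta^1_1(G)$, contradicting the conclusion of Theorem \ref{MainResult} for our chosen parameters. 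Therefore no Haar meager set can contain $E$.

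I expect no genuine obstacle here: all the real work has already been absorbed into Theorem \ref{MainResult} (and, underneath it, into Lemma \ref{coanalytic}, which is what produced the coanalytic complexity bound for $E$ via the unfolded $**$-game). The only point worth flagging is the contrast with \cite[Proposition~8]{DRVV}, which asserts that every $\Sigma^1_1$ naively Haar meager set is already Haar meager: the statement being proved shows that this phenomenon breaks down one level higher in the projective hierarchy, since the coanalytic set $E$ is naively strongly Haar meager and yet is not contained in any Haar meager set at all.
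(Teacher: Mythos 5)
Your proposal is correct and is essentially the paper's own argument: the paper presents Theorem \ref{MR2} as ``just a special case'' of Theorem \ref{MainResult} with $\Gamma=\Pi^1_1$ and $\Lambda=\Delta^1_1$. Your explicit bridging step --- that any Haar meager hull $H$ yields a Borel Haar meager hull $B\supseteq H$ via the Borel set built into Definition \ref{definition HM}(i), so ``no Borel Haar meager hull'' already means ``no Haar meager hull at all'' --- is exactly the (correct) observation the paper leaves implicit.
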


\section{Relationship between Haar meager sets and compact sets}\label{compacts}

%\begin{notation}

%We denote the group of permutation on $\omega$ with topology inherited from $\omega^{\omega}$ by $S_{\infty}$. Let $x\in\omega^{\omega}$ and $n\in\omega$. We set 
%$$A_{x,n}=\{y\in\omega^{\omega};\ x\rest_n=y\rest_n\}.$$

%\end{notation}

In the following definitions we introduce two conditions, both of which are sufficient for $F_{\sigma}$ sets to be strongly Haar meager. Note that a related result is also proved in \cite[Proposition 5.9]{BaKaRa} where it is shown that a closed set is Haar meager if and only if it is not prethick.

\begin{definition}

Let $G$ be a Polish group. We say that a set $A\subseteq G$ satisfies the \emph{finite translation property} $\FTP$ if for every open set $\emptyset\neq U\subseteq G$ there exists a finite set $M\subseteq U$ such that for every $g,h\in G$ we have $gMh\nsubseteq A$.

\end{definition}

\begin{definition}

Let $G$ be a Polish group. We say that a set $A\subseteq G$ satisfies the \emph{finite open translation property} $\FOTP$ if for every open set $\emptyset\neq U\subseteq G$ there exists a finite collection $\mathcal{M}$ of open nonempty subsets of $U$ such that for every $g,h\in G$ there exists $V\in\mathcal{M}$ with $gVh\cap A=\emptyset$.

\end{definition}

\begin{remark}
Let $G$ be a Polish group, and let $A\subseteq G$. Then clearly

\begin{itemize}

\item{} $A$ satisfies $\FOTP$ if and only if $\overline{A}$ satisfy $\FOTP$,
\item{} if $A$ satisfies $\FOTP$ then $A$ satisfy $\FTP$.

\end{itemize}

\end{remark}

%We do not know whether conditions $\HMC$ and $\HNC$ are equivalent.

\begin{theorem}\label{FTP}

Let $G$ be a Polish group. Suppose that $A\subseteq G$ is an $F_{\sigma}$ set which satisfies $\FTP$. Then $A$ is strongly Haar meager.

\end{theorem}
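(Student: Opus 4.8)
The plan is to take $B=A$ (which is Borel since $A$ is $F_\sigma$) and to construct a single compact set $K$ witnessing that $A$ is strongly Haar meager. Write $A=\bigcup_{n}F_n$ with each $F_n$ closed. I would build $K$ as the body of a Cantor scheme, so that its basic relatively clopen pieces $K_s:=K\cap\overline{U_s}$, indexed by the nodes $s$ of a finitely branching tree, form a basis of $K$. The key reduction is that it suffices to arrange the property
$$(\ast)\qquad K_s\not\subseteq gAh\quad\textnormal{for every node }s\textnormal{ and all }g,h\in G.$$
Indeed, fix $g,h$; then $gAh\cap K=\bigcup_n(gF_nh\cap K)$ is a countable union of sets closed in $K$. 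If some $gF_nh\cap K$ were not nowhere dense, its relative interior would contain some basic piece $K_s$, whence $K_s\subseteq gF_nh\subseteq gAh$, contradicting $(\ast)$. Thus each $gF_nh\cap K$ is closed and nowhere dense, so $gAh\cap K$ is meager in $K$. This is exactly the step where the hypothesis that $A$ is $F_\sigma$ is used.

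Next I would reformulate $\FTP$ in the form needed for $(\ast)$. Since $gmh\notin A$ is equivalent to $m\notin g^{-1}Ah^{-1}$, and $g,h\mapsto g^{-1},h^{-1}$ is a bijection of $G$, the condition ``$gMh\not\subseteq A$ for all $g,h$'' is the same as ``$M\not\subseteq gAh$ for all $g,h$''. Hence $\FTP$ provides, for every nonempty open $U$, a finite nonempty $M\subseteq U$ with $M\not\subseteq gAh$ for all $g,h$ (nonemptiness is automatic, as $\emptyset\subseteq A$ would violate $\FTP$). The Cantor scheme then proceeds by recursion on the nodes: given an open $U_s$ with $\diam(U_s)<2^{-|s|}$, apply $\FTP$ to obtain such a finite set $M_s\subseteq U_s$, and let the children of $s$ be indexed by the points of $M_s$, choosing for each $p\in M_s$ an open neighbourhood $U_{s^\frown p}\subseteq U_s$ of $p$ of diameter $<2^{-|s|-1}$, with the closures of distinct children pairwise disjoint and contained in $U_s$. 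The crucial extra requirement is that each $p\in M_s$ persist into $K$: I would arrange $p\in M_{s^\frown p}$ (one may always enlarge the $\FTP$-set at $s^\frown p$ by the point $p\in U_{s^\frown p}$ without destroying $M_{s^\frown p}\not\subseteq gAh$, since adjoining points cannot turn a non-subset into a subset). Following the branch through $s$ that always takes the child centred at $p$, the neighbourhoods shrink to $p$, so $p\in K$; and since $p\in U_s$ we get $p\in K_s$. Therefore $M_s\subseteq K_s$, and $(\ast)$ follows immediately from $M_s\not\subseteq gAh$.

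It remains to confirm that $K=\bigcap_m\bigcup_{|s|=m}\overline{U_s}$ is genuinely compact, which is the one point where the absence of local compactness must be handled with care: one cannot take the individual $\overline{U_s}$ to be compact. Instead, $K$ is closed (a decreasing intersection of finite unions of closed sets) and hence complete, while at each level $m$ it is covered by the finitely many sets $\overline{U_s}$, $|s|=m$, each of diameter $<2^{-m}$, so $K$ is totally bounded; being complete and totally bounded, it is compact. I expect the main obstacle to be precisely the bookkeeping in the recursion: one must simultaneously keep the branching finite, force the diameters to zero, keep the closures of siblings disjoint, and carry each chosen point of every $M_s$ down a dedicated branch so that it survives as an element of $K_s$. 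Once this scheme is set up, $(\ast)$, and hence the theorem, follow directly.
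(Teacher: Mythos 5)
Your proposal is correct and follows essentially the same route as the paper's proof: a finitely branching Cantor scheme built by applying $\FTP$ at each node (after the same reformulation $gMh\nsubseteq A \Leftrightarrow M\nsubseteq gAh$), with each chosen point carried down a dedicated branch so that it survives into $K$, and the $F_\sigma$ hypothesis used to reduce meagerness of $gAh\cap K$ to the basic relatively clopen pieces never being contained in a translate. The only cosmetic point is that you should fix a compatible \emph{complete} metric on $G$ at the outset (as the paper does), so that your ``closed, hence complete, plus totally bounded'' compactness argument for $K$ is justified.
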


\begin{proof}

We fix a compatible complete metric on $G$ such that $\diam(G)\leq 1$. We will inductively construct a nonempty finitely branching pruned tree $T\subseteq\omega^{<\omega}$, together with nonempty open sets $U_t\subseteq G$ and points $x_t\in G$ for $t\in T$ such that

\begin{itemize}

\item[(i)]$\diam(U_t)\leq 2^{-|t|}$, $t\in T$,
\item[(ii)]if $s,t\in T$ are such that $s$ is an initial segment of $t$ then $\overline{U_t}\subseteq U_s$,
\item[(iii)]if $s,t\in T$ are such that $|s|=|t|$ and $s\neq t$, then $\overline{U_s}\cap \overline{U_t}=\emptyset$,
\item[(iv)]$x_t\in U_t$, $t\in T$,
\item[(v)]for every $t\in T$ we have $t\hat{\ }0\in T$ and $x_{t\hat{\ }0}=x_t$,
\item[(vi)]for every $t\in T$ and $g,h\in G$ we have $\{x_{t\hat{\ }i}\in G\colon i\in\omega\textnormal{ such that }t\hat{\ }i\in T\}\nsubseteq gAh$.

\end{itemize}

We start the construction by setting $U_{\emptyset}=G$, and by choosing $x_{\emptyset}$ to be an arbitrary element of $G$. Now, suppose that for some $n\in\omega$, we have already constructed all elements of the tree $T$ whose length is at most $n$, as well as the sets $U_t$ and points $x_t$ for $t\in T$, $|t|\le n$. Let $t\in T$ with $|t|=n$ be fixed. Since $A$ satisfies $\FTP$ there exists a finite set $M\subseteq U_t$ such that for all $g,h\in G$ we have $gMh\nsubseteq A$. Let $x_{t\hat{\ }0},\ldots,x_{t\hat{\ }k}$ be pairwise distinct elements of $G$ such that $x_{t\hat{\ }0}=x_t$ and $\{x_{t\hat{\ }0},\ldots,x_{t\hat{\ }k}\}=M\cup\{x_t\}$. We define $t\hat{\ }i\in T$ if and only if $i\le k$. We find $U_{t\hat{\ }i}$ for $i\le k$ such that the conditions (i)-(iv) are satisfied. This construction clearly works.

Let $[T]=\{t\in\omega^{\omega}\colon t\rest_k\in T\textnormal{ for every }k\in\omega\}$ be the set of all infinite branches of the tree $T$. We put 
$$L=\bigcup_{t\in [T]}\bigcap_{k\in\omega}U_{t\ \rest_k}.$$
Then $L\subseteq G$ is a compact set by (i)-(iii). We show that $L$ is the witnessing compact for the fact that $A$ is strongly Haar meager. Since $A$ is $F_{\sigma}$, it suffices to show that the relative (in $L$) interior of each translation of $A$ is empty. By (ii), (iv) and (v) we have that $\{x_t\colon t\in X\}\subseteq L$. By this fact together with (ii), (iv) and (vi) we obtain that for every $t\in X$ and every $g,h\in G$ we have $U_t\cap L\nsubseteq gAh$. This finishes the proof as the sets $U_t\cap L$, $t\in T$, clearly form a basis of relatively open sets in $L$.
\end{proof}

\begin{corollary}

Let $G$ be a Polish group. Suppose that $A\subseteq G$ satisfies $FOTP$. Then $A$ is strongly Haar meager.

\end{corollary}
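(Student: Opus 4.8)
The plan is to reduce the statement to Theorem~\ref{FTP} by passing from $A$ to its closure $\overline{A}$, which simultaneously secures both hypotheses of that theorem: the $F_{\sigma}$ requirement and the finite translation property $\FTP$. The corollary is essentially a packaging of Theorem~\ref{FTP} together with the preceding Remark, so the work lies in verifying that the two reductions (closure, and passage to a subset) are legitimate rather than in any substantial new argument.

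First I would invoke the preceding Remark, which records two facts: that $A$ satisfies $\FOTP$ exactly when $\overline{A}$ does, and that $\FOTP$ implies $\FTP$. Combining these, from the assumption that $A$ satisfies $\FOTP$ I obtain that $\overline{A}$ satisfies $\FOTP$, and hence that $\overline{A}$ satisfies $\FTP$. Since $\overline{A}$ is closed, it is in particular an $F_{\sigma}$ set, so $\overline{A}$ meets both hypotheses of Theorem~\ref{FTP}. Applying that theorem produces a compact set $K\subseteq G$ witnessing that $\overline{A}$ is strongly Haar meager; concretely, $g\overline{A}h\cap K$ is meager in $K$ for all $g,h\in G$.

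It then remains to transfer this conclusion from $\overline{A}$ back to $A$, using that strong Haar meagerness is inherited by subsets. Since $A\subseteq\overline{A}$ and $\overline{A}$ is itself a Borel (indeed closed) set, the closed set $\overline{A}$ serves directly as the Borel hull $B$ in part~(ii) of Definition~\ref{definition HM} for the set $A$, with the very same witnessing compact $K$. Thus $A$ is strongly Haar meager.

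I do not anticipate a genuine obstacle, as every step is immediate once the Remark is in hand; the only point requiring a moment's care is the final monotonicity step, namely checking that a Borel superset witnessing strong Haar meagerness of $\overline{A}$ legitimately doubles as a witnessing Borel hull for the smaller set $A$. This is clear from the definition, since the condition ``$gBh\cap K$ is meager in $K$ for every $g,h\in G$'' depends only on $B$ and $K$, and the only constraint linking it to $A$ is the inclusion $A\subseteq B$, which holds here because $A\subseteq\overline{A}$.
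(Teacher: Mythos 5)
Your proposal is correct and follows exactly the paper's own argument: pass to $\overline{A}$ via the Remark, apply Theorem~\ref{FTP} to the closed (hence $F_{\sigma}$) set $\overline{A}$, and transfer strong Haar meagerness down to the subset $A$. The extra care you take with the final monotonicity step (that $\overline{A}$ itself serves as the Borel hull for $A$) is a detail the paper leaves implicit, but it is the same proof.
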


\begin{proof}

Since $A$ satisfies $\FOTP$, its closure $\overline{A}$ also satisfies $\FOTP$. Thus $\overline{A}$ satisfies $\FTP$. By Theorem~\ref{FTP} we have that $\overline{A}$ is strongly Haar meager, so $A$ is strongly Haar meager as well.
\end{proof}

Theorems \ref{S_infty} and \ref{InvariantMetric} provide examples of Polish groups in which all compact sets satisfy $\FTP$ (even $\FOTP$ in the latter case).

\begin{theorem}\label{S_infty}

Every compact subset of $S_{\infty}$ satisfies $\FTP$.

\end{theorem}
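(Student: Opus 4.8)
The plan is to exploit the fact that, since $A$ is compact, each of its coordinate projections is finite, and then to defeat all two--sided translates simultaneously by a pigeonhole argument on finitely many output coordinates.

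First I would reduce to a basic open set. Since it suffices to produce the witnessing finite set $M$ for some nonempty basic open $U'\subseteq U$, I may assume $U=\{x\in S_\infty\colon x(a_l)=b_l,\ l=1,\dots,m\}$ for some distinct inputs $a_1,\dots,a_m$ and distinct outputs $b_1,\dots,b_m$ (with $m=0$ allowed, i.e.\ $U=S_\infty$). The only consequence of compactness I would use is that for each output coordinate $c\in\omega$ the column $A_c:=\{p(c)\colon p\in A\}$ is finite; this holds because a compact subset of $\omega^{\omega}$ is bounded in every coordinate. Put $b_c:=|A_c|$ and $B^\ast:=\max\{b_c\colon 0\le c\le m\}$, a finite number.

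The heart of the construction is a finite family $M=\{x_0,\dots,x_{B^\ast}\}\subseteq U$ of size $B^\ast+1$ that \emph{disagrees maximally off the constrained coordinates}: every $x_i$ sends $a_l\mapsto b_l$, while for every input $q\notin\{a_1,\dots,a_m\}$ the values $x_0(q),\dots,x_{B^\ast}(q)$ are pairwise distinct. Such a family exists: each $x_i$ fixes the assignment $a_l\mapsto b_l$ and acts on the complementary countable sets $\omega\setminus\{a_l\}$ and $\omega\setminus\{b_l\}$; identifying both of these with $\mathbb Z$ and letting $x_i$ correspond to the shift $n\mapsto n+i$ gives bijections with $x_i(q)\ne x_j(q)$ for all $q$ off $\{a_l\}$ and $i\ne j$. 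Then at every unconstrained input the family realizes $B^\ast+1$ distinct values, while at the $a_l$ it is forced to collapse to the single value $b_l$.

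Finally I would argue that no translate can contain $M$. Suppose toward a contradiction that $gMh\subseteq A$ for some $g,h\in S_\infty$. Consider the $m+1$ output coordinates $0,1,\dots,m$; their preimages $h(0),\dots,h(m)$ are $m+1$ distinct inputs, so at most $m$ of them can lie in the $m$-element set $\{a_1,\dots,a_m\}$, and hence some $c^\ast\in\{0,\dots,m\}$ has $q^\ast:=h(c^\ast)\notin\{a_1,\dots,a_m\}$. For each $x\in M$ the assumption $gxh\in A$ gives $g\bigl(x(q^\ast)\bigr)=(gxh)(c^\ast)\in A_{c^\ast}$, and since $x\mapsto x(q^\ast)$ is injective on $M$ (unconstrained coordinate) and $g$ is a bijection, the map $x\mapsto g(x(q^\ast))$ injects $M$ into $A_{c^\ast}$. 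Thus $B^\ast+1=|M|\le b_{c^\ast}\le B^\ast$, a contradiction, so $gMh\nsubseteq A$ for all $g,h$ and $A$ satisfies $\FTP$.

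The step I expect to be the main obstacle is the interaction with the constraints defining $U$: a single output column never suffices, because the translation $h$ can route that column through one of the constrained inputs $a_l$ (where $M$ is forced to agree) and thereby evade the cardinality bound. The resolution is to probe $m+1$ output columns at once, so that $h$ cannot hide all of them behind the $m$ constrained inputs, combined with choosing $M$ so rich that it overflows the relevant column wherever it is genuinely free. Making these two requirements compatible—i.e.\ producing one finite $M\subseteq U$ that simultaneously respects the $U$-constraints and disagrees everywhere off them—is the only delicate point, and it is exactly what the shift construction handles.
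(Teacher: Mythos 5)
Your proof is correct and takes essentially the same route as the paper's: compactness gives finite coordinate columns, you build a finite set $M\subseteq U$ respecting the basic-open constraints and pairwise disagreeing at every unconstrained input, and a pigeonhole on the first $m+1$ output coordinates finds one that $h$ routes through an unconstrained input, so the translate overflows the corresponding column of $A$. The only cosmetic differences are that the paper works with initial-segment basic sets and a nondecreasing envelope $s$ of all column sizes (comparing $s(k)+1$ values against $s(h^{-1}(i))\le s(k)$), whereas you use general finite partial-injection basic sets and the maximum of the finitely many relevant column sizes.
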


\begin{proof}
Let $K\subseteq S_{\infty}$ be a compact set.
Let $s\in\omega^{\omega}$ be a nondecreasing sequence such that for all $n\in\omega$ we have
$$\#\{x(n)\in\omega\colon x\in K\}\leq s(n).$$
Let $\emptyset\neq U\subseteq G$ be open. Let us fix $z\in U$ and find $k\in\omega$ such that 
$$\{y\in S_{\infty}\colon z\rest_k\textnormal{ is an initial segment of }y\}\subseteq U.$$
Clearly, we can find $M=\{x_0,\dots,x_{s(k)}\}\subseteq S_{\infty}$ such that $z\rest_k$ is an initial segment of $x_i$ for every $i\le s(k)$, and such that $x_i(n)\neq x_j(n)$ for every $n\geq k$ and $i\neq j$. Then $M\subseteq U$. Let us fix $g,h\in S_{\infty}$, and let us fix some $i\geq k$ with $h^{-1}(i)\leq k$. Then we have
\begin{eqnarray}\nonumber
\#\{y(h^{-1}(i))\colon y\in gMh\}&=&\#\{gyh(h^{-1}(i))\colon y\in M\}\\
&=&\#\{gy(i);\ y\in M\}\nonumber\\
&=&\#\{y(i)\colon y\in M\}\nonumber\\
&=&s(k)+1\nonumber\\
&>&s(h^{-1}(i))\nonumber\\
&\geq&\#\{y(h^{-1}(i))\colon y\in K\}.\nonumber
\end{eqnarray}
Thus $gMh\nsubseteq K$.
\end{proof}

\begin{corollary}

All compact subsets of $S_{\infty}$ are strongly Haar meager.

\end{corollary}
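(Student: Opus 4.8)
The plan is to combine the two results immediately preceding the statement, as the corollary is a direct consequence of them. First I would fix an arbitrary compact set $K\subseteq S_{\infty}$ and make the one new observation needed: since $S_{\infty}$ is a Polish (hence Hausdorff metrizable) group, every compact subset is closed, and a closed set is trivially $F_{\sigma}$. Thus $K$ is an $F_{\sigma}$ subset of the Polish group $S_{\infty}$.

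Next I would apply Theorem \ref{S_infty}, which asserts precisely that every compact subset of $S_{\infty}$ satisfies $\FTP$; in particular $K$ satisfies $\FTP$. Having established that $K$ is an $F_{\sigma}$ set satisfying $\FTP$, the hypotheses of Theorem \ref{FTP} are met with $G=S_{\infty}$ and $A=K$, so I would invoke that theorem to conclude that $K$ is strongly Haar meager. Since $K$ was an arbitrary compact subset, this proves the statement.

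I do not anticipate any genuine obstacle, since all the substantive work has already been done: the combinatorial heart lies in Theorem \ref{S_infty}, where the finite translation family $M$ is built using the bound $s(n)$ on the number of values $x(n)$ taken by elements of the compact set $K$, and the geometric heart lies in Theorem \ref{FTP}, where the finitely branching tree $T$ and the witnessing compact $L$ are constructed inductively. The only ingredient the corollary itself contributes is the remark that compactness supplies the $F_{\sigma}$ hypothesis required to pass from $\FTP$ to strong Haar meagerness via Theorem \ref{FTP}.
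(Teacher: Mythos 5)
Your proposal is correct and is exactly the argument the paper intends: the corollary follows by combining Theorem \ref{S_infty} (compact subsets of $S_{\infty}$ satisfy $\FTP$) with Theorem \ref{FTP}, using the observation that a compact subset of a metrizable group is closed, hence $F_{\sigma}$. The paper leaves this proof implicit, so your write-up simply makes the same two-step deduction explicit.
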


\begin{theorem}\label{InvariantMetric}

Let $G$ be a non-locally compact Polish group which admits a translation invariant metric. Then every compact subset of $G$ satisfies $\FOTP$.

\end{theorem}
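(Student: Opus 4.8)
The plan is to fix a compact set $K\subseteq G$ and a nonempty open set $U\subseteq G$, and to manufacture the finite family $\mathcal M$ required by $\FOTP$ through a packing/covering (pigeonhole) argument, exploiting that every two-sided translate of $K$ has the same covering number as $K$. Throughout, $d$ denotes the fixed translation invariant metric, which I read as two-sided (both left- and right-) invariant; this is the case that excludes groups like $S_{\infty}$ and is why the previous theorem treats $S_{\infty}$ separately.

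First I would isolate the one place where non-local compactness is used, as a lemma: \emph{for every nonempty open $U\subseteq G$ there is $\varepsilon>0$ such that $U$ contains infinitely many points that are pairwise at $d$-distance at least $\varepsilon$}. Since $d$ is two-sided invariant and $G$ is Polish, $d$ metrizes the two-sided uniformity of $G$, in which $G$ is complete, so $d$ is a complete metric. Pick a ball $B(x,\delta)\subseteq U$. Its closure is closed in $(G,d)$ and hence $d$-complete; were it totally bounded it would be compact, making $G$ locally compact at $x$ and thus, by homogeneity, everywhere, a contradiction. Therefore $\overline{B(x,\delta)}$, and a fortiori $U$, fails to be totally bounded, so there is $\varepsilon>0$ for which $U$ admits no finite $\varepsilon$-net; a greedy selection then produces infinitely many points of $U$ pairwise at distance at least $\varepsilon$.

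Granting the lemma, I would conclude as follows. Fix the $\varepsilon>0$ it provides for $U$, and cover $K$ by finitely many balls of radius $\varepsilon/5$, say $K\subseteq\bigcup_{j=1}^{N}B(c_j,\varepsilon/5)$. Two-sided invariance gives $g^{-1}Kh^{-1}\subseteq\bigcup_{j=1}^{N}B(g^{-1}c_jh^{-1},\varepsilon/5)$ for all $g,h\in G$, so every two-sided translate of $K$ is still covered by $N$ balls of radius $\varepsilon/5$. Using the lemma, choose $N+1$ points $p_0,\dots,p_N\in U$ pairwise at distance at least $\varepsilon$, and around each pick a nonempty open $V_i\ni p_i$ with $V_i\subseteq U$ and $\diam V_i<\varepsilon/5$; set $\mathcal M=\{V_0,\dots,V_N\}$. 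Fix $g,h\in G$. If $g^{-1}Kh^{-1}$ met every $V_i$, pick $q_i\in g^{-1}Kh^{-1}\cap V_i$; these $N+1$ points are pairwise at distance greater than $\varepsilon-2\varepsilon/5=3\varepsilon/5$, yet by the pigeonhole principle two of them lie in one covering ball of radius $\varepsilon/5$ and so are at distance less than $2\varepsilon/5$, a contradiction. Hence some $V_i$ is disjoint from $g^{-1}Kh^{-1}$, i.e.\ $gV_ih\cap K=\emptyset$, and $\mathcal M$ witnesses $\FOTP$ for $K$ at $U$.

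The main obstacle is the lemma: extracting from non-local compactness a single scale $\varepsilon$ at which an arbitrary open set contains arbitrarily (indeed infinitely) many uniformly separated points. The delicate step is upgrading non-compactness of ball closures to non-total-boundedness, which requires completeness of $d$; this is precisely where two-sided invariance is essential, since for a merely left-invariant (possibly incomplete) metric the argument breaks down, consistent with $S_{\infty}$ needing a separate treatment. Once the lemma is available, the remaining pigeonhole count is routine, and invariance enters only through the invariance of the $(\varepsilon/5)$-covering number of $K$ under two-sided translation.
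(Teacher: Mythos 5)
Your proof is correct and follows essentially the same route as the paper's: both derive completeness of the invariant metric, use non-local compactness to find infinitely many uniformly $\varepsilon$-separated points in $U$, cover $K$ by finitely many small balls, and apply translation invariance plus pigeonhole to find a member of $\mathcal M$ whose translate misses $K$. The only differences are cosmetic: you justify completeness via the two-sided uniformity instead of citing Becker--Kechris, you translate $K$ by $g^{-1},h^{-1}$ rather than translating the members of $\mathcal M$, and your separation constants ($\varepsilon$ vs.\ $\varepsilon/5$) replace the paper's ($4\varepsilon$ vs.\ $\varepsilon$).
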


\begin{proof}
Let us fix a translation invariant metric on $G$.
By \cite[Coralloary 1.2.2]{BeKe} this metric is complete.
Let $K\subseteq G$ be a compact set, and let $\emptyset\neq U\subseteq G$ be an arbitrary open set. Since $G$ is not locally compact, the set $U$ is not totally bounded. Thus, there exists $\varepsilon>0$ and an infinite set $N=\{x_i\colon i\in\omega\}\subseteq U$ such that for every $i\in\omega$ we have $\dist(\{x_i\},N\setminus\{x_i\})\geq 4\varepsilon$. By the compactness of $K$ there is $n\in\omega$ and a finite set $\{y_0,\dots,y_n\}\subseteq K$ such that
$$K\subseteq\bigcup\limits_{i=0}^nB(y_i,\varepsilon).$$
We set 
$$\mathcal{M}=\{B(x_i,\varepsilon)\cap U\colon i=0,\ldots,n+1\}.$$
Using the fact that the fixed metric is translation invariant, it is easy to see that for every $g,h\in G$ and every $i\leq n$ we have
$$\#\{B\in\mathcal{M};\ gBh\cap B(y_i,\varepsilon)\}\leq 1.$$
Thus, for every $g,h\in G$ there exists $B\in\mathcal{M}$ such that
$$gBh\cap K\subseteq gBh\cap\bigcup_{i=0}^n B(y_i,\epsilon)=\emptyset.$$
This shows that $K$ satisfies $\FOTP$.
\end{proof}

The following corollary improves the result by Jab{\l}o{\'n}ska who proved that every compact subset of a non-locally compact abelian Polish group is Haar meager \cite[Corollary 1]{Jablonska}.

\begin{corollary}

Let $G$ be a non-locally compact Polish group which admits a translation invariant metric. Then all compact subsets of $G$ are strongly Haar meager.

\end{corollary}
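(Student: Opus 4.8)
The plan is to obtain this corollary by simply chaining the two immediately preceding results, since all of the genuine work has already been carried out. First I would fix an arbitrary compact set $K\subseteq G$. Because $G$ is non-locally compact and admits a translation invariant metric, Theorem~\ref{InvariantMetric} applies and tells us that $K$ satisfies $\FOTP$. Then the Corollary asserting that every set satisfying $\FOTP$ is strongly Haar meager applies directly to $K$, yielding that $K$ is strongly Haar meager. As $K$ was an arbitrary compact subset, this proves the claim for all compact subsets of $G$.

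Thus at the level of this corollary there is essentially no obstacle: it is a one-line composition of two implications, namely $\text{compact}\Rightarrow\FOTP$ (Theorem~\ref{InvariantMetric}) and $\FOTP\Rightarrow\text{strongly Haar meager}$ (the earlier Corollary, which in turn passes through $\FOTP\Rightarrow\FTP$ for the closure and Theorem~\ref{FTP}). The real content, and hence the only place where a difficulty could arise, lives entirely in the proof of Theorem~\ref{InvariantMetric}: one uses non-local compactness to see that any nonempty open $U$ fails to be totally bounded, extracts from $U$ an infinite family $N=\{x_i\}$ that is $4\varepsilon$-separated, covers $K$ by finitely many $\varepsilon$-balls $B(y_i,\varepsilon)$, and then invokes translation invariance so that a counting argument forces each translate $gBh$ of a member of the finite collection $\mathcal{M}$ to meet at most one covering ball, producing the required $B\in\mathcal{M}$ with $gBh\cap K=\emptyset$. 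Since that argument is assumed available, the corollary follows immediately.

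\begin{proof}
Let $K\subseteq G$ be an arbitrary compact set. Since $G$ is non-locally compact and admits a translation invariant metric, Theorem~\ref{InvariantMetric} shows that $K$ satisfies $\FOTP$. By the corollary stating that every subset of $G$ satisfying $\FOTP$ is strongly Haar meager, it follows that $K$ is strongly Haar meager. As $K$ was arbitrary, all compact subsets of $G$ are strongly Haar meager.
\end{proof}
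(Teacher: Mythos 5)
Your proof is correct and is exactly the derivation the paper intends (the paper states this corollary without proof precisely because it is this immediate chaining): Theorem~\ref{InvariantMetric} gives that every compact $K\subseteq G$ satisfies $\FOTP$, and the earlier corollary gives $\FOTP\Rightarrow$ strongly Haar meager. Nothing further is needed.
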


The following theorem is a weaker reformulation of the result proved by Matou\v skov\'a in \cite[Theorem 2.3]{Matouskova}.

\begin{theorem}\label{EMat}

Let $X$ be a separable reflexive Banach space. Then every bounded, convex and nowhere dense subset of $X$ satisfies $\FTP$.

\end{theorem}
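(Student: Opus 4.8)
The plan is to exploit reflexivity through weak compactness, after translating the finite translation property into a statement about intersections of translates of $\overline{A}$. Since $X$ is an abelian group under addition, the product $gMh$ appearing in the definition of $\FTP$ is just $M+(g+h)$, and as $g,h$ range over $X$ the sum $g+h$ ranges over all of $X$. Hence $A$ satisfies $\FTP$ precisely when, for every nonempty open $U\subseteq X$, there is a finite $M\subseteq U$ with $M+s\nsubseteq A$ for every $s\in X$. Because $A\subseteq\overline{A}$, it suffices to produce a finite $M\subseteq U$ for which $M+s\nsubseteq\overline{A}$ for all $s$, so I would from the outset replace $A$ by its closure and work with the bounded, closed, convex, nowhere dense set $\overline{A}$.

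The key structural input is that $\overline{A}$ is weakly compact: it is bounded and norm-closed, and being convex it is weakly closed by Mazur's theorem, so in the reflexive space $X$ it is weakly compact. (Note that separability is not needed here, which fits the description of the statement as a weaker reformulation of Matou\v{s}kov\'a's theorem.) Each translate $\overline{A}-u$ is then again bounded, convex, and norm-closed, hence weakly closed, and all these translates lie in the weakly compact set $\overline{A}-u_0$ for any fixed $u_0\in U$. Nowhere density of $A$ enters as the single, clean fact that $\overline{A}$ has empty interior.

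The proof itself I would run by contradiction. Suppose no finite $M\subseteq U$ works; then for every finite $M=\{u_1,\dots,u_k\}\subseteq U$ there is $s$ with $M+s\subseteq A\subseteq\overline{A}$, i.e. $s\in\bigcap_{i=1}^k(\overline{A}-u_i)$. This says exactly that the family $\{\overline{A}-u:u\in U\}$ of weakly closed subsets of the weakly compact set $\overline{A}-u_0$ has the finite intersection property. Weak compactness then forces $\bigcap_{u\in U}(\overline{A}-u)\neq\emptyset$; choosing $s$ in this intersection gives $u+s\in\overline{A}$ for every $u\in U$, that is, $U+s\subseteq\overline{A}$. But $U+s$ is a nonempty open set, so $\overline{A}$ would have nonempty interior, contradicting nowhere density. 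Therefore some finite $M\subseteq U$ admits no translate inside $A$, which is $\FTP$.

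The main conceptual step, and the place where all four hypotheses are spent, is recognizing that $\FTP$ is equivalent to the emptiness of an intersection of translates of $\overline{A}$; once this reformulation is in hand the argument is essentially forced. I do not expect a serious technical obstacle, but the point requiring care is the passage to the closure (so that we may invoke weak compactness for the correct, closed set) together with the verification that bounded closed convex sets in a reflexive space are weakly compact, since boundedness and convexity are both indispensable there — a bounded closed set that is not convex need not be weakly closed.
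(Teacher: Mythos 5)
Your proof is essentially correct, and it takes a genuinely different route from the paper, which in fact contains no proof of this theorem at all: there the statement is obtained as a ``weaker reformulation'' of Matou\v{s}kov\'a's Theorem 2.3, which is simply cited. One justification in your write-up, however, is wrong as stated and needs a one-line repair: it is not true that the translates $\overline{A}-u$, $u\in U$, all lie in $\overline{A}-u_0$ --- translation moves the set, it does not shrink it. What saves the argument is that every member of your family is itself weakly compact, since translation is a homeomorphism for the weak topology. So fix $u_0\in U$ and intersect the family with $\overline{A}-u_0$: the sets $(\overline{A}-u)\cap(\overline{A}-u_0)$, $u\in U$, are weakly closed subsets of the weakly compact set $\overline{A}-u_0$, and they inherit the finite intersection property (apply your hypothesis to finite sets containing $u_0$); hence $\bigcap_{u\in U}(\overline{A}-u)\neq\emptyset$, giving $s$ with $U+s\subseteq\overline{A}$ and the desired contradiction with nowhere density. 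All remaining steps --- the additive reformulation of $\FTP$, the passage to $\overline{A}$, and weak compactness of bounded closed convex sets in a reflexive space via Mazur's theorem and Kakutani's characterization of reflexivity --- are correct.

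Comparing the two routes: the paper imports Matou\v{s}kov\'a's result, which is strictly stronger (it is the engine behind her theorem that closed convex nowhere dense subsets of separable reflexive spaces are Haar null) and whose proof is correspondingly more involved, whereas your argument is elementary and self-contained. Moreover, as you observe, separability is never used, so your proof yields the statement for arbitrary reflexive Banach spaces. Your reformulation of $\FTP$ in the abelian setting as the emptiness of $\bigcap_{u\in U}(\overline{A}-u)$ for every nonempty open $U$ is exactly the right lens; it also makes transparent why reflexivity is indispensable: in $c_0$ the bounded closed convex nowhere dense set $A=\{x\in c_0\colon 0\le x(n)\le1\textnormal{ for all }n\}$ gives a family of translates with the finite intersection property but empty total intersection, and indeed this $A$ fails $\FTP$.
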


\begin{corollary}

Let $X$ be a separable reflexive Banach space. Then every convex and nowhere dense subset of $X$ is Haar meager.

\end{corollary}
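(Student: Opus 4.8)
The plan is to reduce to the bounded case covered by Theorem~\ref{EMat} via a decomposition along balls, exploiting that the Haar meager sets form a $\sigma$-ideal (Darji's theorem recalled in the introduction, applicable since $X$ with addition is an abelian Polish group).

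First I would pass to the closure. Let $A\subseteq X$ be convex and nowhere dense. Then $\overline{A}$ is convex, since the closure of a convex subset of a normed space is convex; and $\overline{A}$ is nowhere dense, being a closed set with empty interior (the latter is precisely the nowhere density of $A$). It thus suffices to prove that $\overline{A}$ is Haar meager, as $A\subseteq\overline{A}$.

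Next I would chop $\overline{A}$ into bounded pieces. For each $n\in\omega$ put
$$A_n=\overline{A}\cap\{x\in X\colon\|x\|\le n\}.$$
Each $A_n$ is closed (an intersection of two closed sets), hence $F_{\sigma}$; convex (an intersection of two convex sets); bounded (contained in the ball of radius $n$); and nowhere dense (a subset of the nowhere dense set $\overline{A}$). Theorem~\ref{EMat} then yields that $A_n$ satisfies $\FTP$, and since $A_n$ is an $F_{\sigma}$ set satisfying $\FTP$, Theorem~\ref{FTP} shows that $A_n$ is strongly Haar meager, in particular Haar meager.

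Finally, since $\overline{A}=\bigcup_{n\in\omega}A_n$ is a countable union of Haar meager sets and the Haar meager sets form a $\sigma$-ideal, $\overline{A}$ is Haar meager, and therefore so is $A$. The only real obstacle is the gap between Theorem~\ref{EMat}, which assumes boundedness, and the corollary, which does not; the ball decomposition closes this gap, while convexity, nowhere density, and closedness are all manifestly preserved under intersecting with a ball and under taking closures. Note that no local-compactness hypothesis is needed here, as neither Theorem~\ref{EMat} nor Theorem~\ref{FTP} requires one.
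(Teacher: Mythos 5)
Your proof is correct and follows essentially the same route as the paper: decompose into bounded, convex, nowhere dense pieces, apply Theorem~\ref{EMat} to get $\FTP$, Theorem~\ref{FTP} to get strong Haar meagerness of each piece, and conclude via the $\sigma$-ideal property. Your preliminary passage to the closure $\overline{A}$ is in fact a welcome refinement: Theorem~\ref{FTP} applies only to $F_{\sigma}$ sets, a hypothesis the paper's one-line decomposition glosses over (a convex nowhere dense set need not be Borel), whereas your pieces $\overline{A}\cap\{x\colon\|x\|\le n\}$ are closed and so satisfy it automatically.
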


\begin{proof}
Each convex and nowhere dense subset of $X$ is the countable union of bounded, convex and nowhere dense sets. Thus the assertion follows from the fact that Haar meager sets form a $\sigma$-ideal, together with Theorems \ref{FTP} and \ref{EMat}.
\end{proof}

%\section*{References}

\bibliography{Ref}

\end{document}